\theoremstyle{plain} 
\newtheorem{theorem}{\indent\sc Theorem}[section]
\newtheorem{lemma}[theorem]{\indent\sc Lemma}
\newtheorem{corollary}[theorem]{\indent\sc Corollary}
\newtheorem{proposition}[theorem]{\indent\sc Proposition}
\theoremstyle{definition} 
\newtheorem{definition}[theorem]{\indent\sc Definition}
\newtheorem{remark}[theorem]{\indent\sc Remark}
\newtheorem{example}[theorem]{\indent\sc Example}
\newcommand{\ddbar}{\partial \bar{\partial}}
\newcommand{\dbar}{\bar{\partial}}
\newcommand{\tensor}{\otimes \det}
\newcommand{\wtilde}{\widetilde}
\newcommand{\what}{\widehat}
\begin{document}

\title[$L^2$ estimates and vanishing theorems]
{$L^2$ estimates and vanishing theorems for holomorphic vector bundles equipped with singular Hermitian metrics} 

\author[T. Inayama]{Takahiro Inayama} 

\subjclass[2010]{ 
Primary 32J25; Secondary 32L20.
}
%
\keywords{ 
Singular Hermitian metrics on vector bundles, $L^2$ estimates, cohomology vanishing.
}
\address{
Graduate School of Mathematical Sciences, The University of Tokyo \endgraf
3-8-1 Komaba, Meguro-ku, Tokyo, 153-8914 \endgraf
Japan
}
\email{inayama@ms.u-tokyo.ac.jp}

\maketitle

\begin{abstract}
We investigate singular Hermitian metrics on vector bundles, especially strictly Griffiths positive ones.
$L^2$ estimates and vanishing theorems usually require an assumption that vector bundles are Nakano positive. However,
there is no general definition of the Nakano positivity in singular settings.
In this paper, we show various $L^2$ estimates and vanishing theorems by assuming that
the vector bundle is strictly Griffiths positive and the base manifold is projective.
\end{abstract}

\section{Introduction}

We investigate singular Hermitian metrics on vector bundles.
Singular Hermitian metrics on line bundles have a key role in complex geometry.
They make it possible that we apply complex analytic methods to complex algebraic geometry (cf. \cite{DemAna}).
Singular Hermitian metrics on vector bundles were also introduced and investigated
in many papers (for examples, \cite{BP}, \cite{deC}, \cite{HPS}, \cite{PT}, \cite{Rau1}, etc.).
However, it is known that curvature currents of
singular Hermitian metrics on vector bundles are not always defined with measure coefficients \cite[Theorem 1.5]{Rau1}.
As a result, a positivity of singular Hermitian metrics on vector bundles generally cannot
be dealt with directly by using the curvature currents. Griffiths semi-positivity or
semi-negativity of singular Hermitian metrics is defined without using the curvature currents (\cite{BP}, \cite{PT}, \cite{Rau1}, see Definition \ref{GrifPos}).
Nevertheless, a general definition of Nakano positivity has not been formulated
even though $L^2$ estimates and vanishing theorems usually require an assumption that vector bundles are Nakano positive.

In this paper, we show various $L^2$ estimates and vanishing theorems.
To be precise, we have the following result. We let
$X$ be an $n$-dimensional complex projective manifold, let $\omega $ be a K\"ahler form on $X$, let $dV_\omega=\frac{\omega^n}{n!}$ be the volume form determined by $\omega$,
and let $E\to X$ be a holomorphic vector bundle of rank $r$ over $X$.
For simplicity, unless otherwise stated, we fix the K\"ahler metric $\omega$ on the projective manifold $X$ as $\omega=\sqrt{-1}\Theta_{h_A}$ for some ample line bundle $(A, h_A)$ (since $\omega$ is only referenced to define strict Griffiths $\delta_\omega$-positivity on $X$, this does not alter the situation in any meaningful way).

\begin{theorem}\label{mainthm1}
Let $h$ be a Griffiths semi-positive singular Hermitian metric on $E$. We assume that there exists a proper analytic subset $S$ such that 
$h$ is strictly Griffiths $\delta_\omega$-positive on $X\setminus S$ in the sense of Definition \ref{def:generalnegativity}. 
Suppose that $f$ is an $E$-valued $(n,n)$-form
with finite $L^2$-norm. Then there is an $E$-valued $(n,n-1)$-form $g$ such that
$$
\dbar g=f, \hspace{5mm}\int_X | g |^2_{h,\omega} dV_\omega \le \frac{1}{\delta n}\int_X | f|^2_{h,\omega} dV_\omega .
$$
\end{theorem}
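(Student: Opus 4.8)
The plan is to reduce the statement to the classical Hörmander--Andreotti--Vesentini $L^2$ machinery; the one input that lets this work with only \emph{Griffiths} positivity is a phenomenon special to bidegree $(n,n)$: the curvature term in the Bochner--Kodaira--Nakano identity, applied to an $E$-valued $(n,n)$-form, sees only the \emph{trace} of the curvature of $(E,h)$, and the trace of a Griffiths positive curvature is controlled by the Griffiths bound alone. Assume first that $h$ is smooth on $X\setminus S$. Fix $x_0\in X\setminus S$, choose coordinates with $\omega(x_0)=i\sum_j dz_j\wedge d\bar z_j$ and a frame of $E$ normal at $x_0$, and write $i\Theta_h(E)=i\sum c_{jk\lambda\mu}\,dz_j\wedge d\bar z_k\otimes e_\lambda^*\otimes e_\mu$. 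For an $E$-valued $(n,n)$-form $v=\sum_\lambda v_\lambda\, dz_1\wedge\cdots\wedge dz_n\wedge d\bar z_1\wedge\cdots\wedge d\bar z_n\otimes e_\lambda$, the standard formula \cite{DemAna}
$$
\langle[i\Theta_h(E),\Lambda_\omega]v,v\rangle=\sum_{|T|=n-1}\ \sum_{j,k,\lambda,\mu}c_{jk\lambda\mu}\,v_{jT\lambda}\,\overline{v_{kT\mu}}
$$
(with $v_{jT\lambda}$ the antisymmetrized components, $=0$ if $j\in T$) collapses: the only increasing multi-index $T$ of length $n-1$ is the complement of a single index $m$, which forces $j=k=m$, so the right-hand side equals $\sum_{m=1}^n\sum_{\lambda,\mu}c_{mm\lambda\mu}v_\lambda\overline{v_\mu}$, i.e. $\sum_{m=1}^n\bigl(i\Theta_h(E)(\partial/\partial z_m,\overline{\partial/\partial z_m})\bigr)$ evaluated on $\sum_\lambda v_\lambda e_\lambda$. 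Strict Griffiths $\delta$-positivity bounds each of the $n$ terms below by $\delta\,|v|^2_{h,\omega}$, whence $\langle[i\Theta_h(E),\Lambda_\omega]v,v\rangle_{h,\omega}\ge \delta n\,|v|^2_{h,\omega}$ on $X\setminus S$. This is the estimate responsible for the constant $1/(\delta n)$; replacing $\omega$ by a larger Kähler metric $\omega'\ge\omega$ the same computation gives the lower bound $\delta\,(\mathrm{tr}_{\omega'}\omega)\,|v|^2_{h,\omega'}$, which is what I will need to descend from a complete metric.

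Still assuming $h$ smooth off $S$, I would run the usual argument on $Y:=X\setminus S$, which carries a complete Kähler metric $\theta$ because $S$ is a proper analytic subset of the compact Kähler manifold $X$; put $\omega_\varepsilon:=\omega+\varepsilon\theta$, complete for $\varepsilon>0$. For an $E$-valued $(n,n)$-form $v$ in the domain of $\dbar^*$ on $(Y,\omega_\varepsilon,h)$ one has $\dbar v=0$ and $D'v=0$, so the integrated Bochner--Kodaira--Nakano identity together with the curvature bound gives $\|\dbar^*v\|^2_{h,\omega_\varepsilon}\ge\delta\int_Y(\mathrm{tr}_{\omega_\varepsilon}\omega)\,|v|^2_{h,\omega_\varepsilon}\,dV_{\omega_\varepsilon}$ (no boundary term, by completeness). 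The standard Cauchy--Schwarz/Riesz argument then produces $g_\varepsilon$ with $\dbar g_\varepsilon=f$ and
$$
\int_Y|g_\varepsilon|^2_{h,\omega_\varepsilon}\,dV_{\omega_\varepsilon}\ \le\ \frac1\delta\int_Y\frac{|f|^2_{h,\omega_\varepsilon}}{\mathrm{tr}_{\omega_\varepsilon}\omega}\,dV_{\omega_\varepsilon}\ \le\ \frac1{\delta n}\int_X|f|^2_{h,\omega}\,dV_\omega ,
$$
the last step being the pointwise inequality $\frac{|f|^2_{h,\omega_\varepsilon}}{\mathrm{tr}_{\omega_\varepsilon}\omega}\,dV_{\omega_\varepsilon}\le\frac1n\,|f|^2_{h,\omega}\,dV_\omega$ for $(n,n)$-forms, a consequence of the arithmetic--geometric mean inequality applied to the eigenvalues of $\omega$ with respect to $\omega_\varepsilon$ (all $\le1$). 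Since for forms of bidegree $(n,\bullet)$ the density $|\cdot|^2_{h,\omega'}\,dV_{\omega'}$ is nonincreasing in $\omega'$, the family $\{g_\varepsilon\}_{\varepsilon>0}$ is uniformly bounded in $L^2_{\mathrm{loc}}(Y)$; letting $\varepsilon\downarrow0$ and passing to a weak limit yields $g$ on $Y$ with $\dbar g=f$ and $\int_Y|g|^2_{h,\omega}\,dV_\omega\le\frac1{\delta n}\int_X|f|^2_{h,\omega}\,dV_\omega$; as $S$ is a proper analytic subset (real codimension $\ge2$), it is removable for $\dbar$ with $L^2$ data, so this is a solution on $X$.

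To remove the smoothness assumption I would approximate: using that $h$ is continuous on $X\setminus S$ and that $X$ is projective, construct smooth Hermitian metrics $h_\nu$ on $E$, increasing to $h$, strictly Griffiths $\delta_\nu$-positive on $X\setminus S$ with $\delta_\nu\uparrow\delta$; then apply the previous step to each $h_\nu$ (the relevant integrals are finite and uniformly bounded since $h_1\le h_\nu\le h$, and the $g_\nu$ are uniformly bounded in $L^2_{\mathrm{loc}}(X)$), take a weak limit $g$, and get $\dbar g=f$ with the desired estimate by weak lower semicontinuity of the $h_\nu$-norms followed by monotone convergence $h_\nu\uparrow h$, $\delta_\nu\uparrow\delta$. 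The genuinely delicate point — and where I expect most of the work to go, and where the hypotheses (continuity of $h$ off $S$, projectivity of $X$) are really used — is exactly this regularization: Griffiths positivity, unlike plurisubharmonicity, is not visibly stable under convolution (in $|s(z)|^2_{h_\varepsilon(z)}$ the section argument and the metric decouple under mollification) and is destroyed by partition-of-unity gluing, since convex combinations of Griffiths positive metrics need not be Griffiths positive. I would carry it out by transferring $h$ to a singular semipositive metric on the tautological line bundle $\mathcal O_{\mathbb P(E)}(1)$ over $\mathbb P(E)$, where Demailly-type global regularization (with an $\varepsilon_\nu\to0$ loss of positivity) is available, and pulling the resulting fiberwise Fubini--Study metrics back to $E$; controlling the loss and matching it against the strict $\delta$-positivity off $S$ is the main technical burden.
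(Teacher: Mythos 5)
Your curvature computation in bidegree $(n,n)$ is exactly the paper's Lemma \ref{smoothdbar}, and your treatment of the singular locus by a complete K\"ahler metric $\omega_\varepsilon=\omega+\varepsilon\theta$ on $X\setminus S$ plus monotonicity of $(n,\bullet)$-norms is a legitimate (arguably cleaner) alternative to what the paper does. But the step you yourself flag as ``the main technical burden'' --- producing smooth Griffiths $\delta_\nu$-positive metrics $h_\nu\uparrow h$ --- is a genuine gap, and the route you propose for it does not work. Passing to a positively curved singular metric on $\mathcal O_{\mathbb P(E)}(1)$, regularizing there, and pulling back ``fiberwise Fubini--Study'' metrics to $E$ does not return a metric whose Griffiths positivity you can certify: recovering a Griffiths positive metric on $E$ from a positive metric on $\mathcal O_{\mathbb P(E)}(1)$ is essentially the open problem of whether ample bundles are Griffiths positive, and there is no quantitative control of the curvature lower bound under that construction. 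Since the entire theorem reduces to this regularization, the proof is not complete as proposed.

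The paper circumvents this by never regularizing globally. Projectivity is used to find a Zariski open \emph{Stein} subset $Z\subset X\setminus S$ on which $E$ is trivial; $Z$ is properly embedded in $\mathbb C^N$ and, by Siu's theorem, admits a neighborhood $U$ with a holomorphic retraction $p\colon U\to Z$. The pulled-back dual metric $p^\star h^\star$ is then a Griffiths semi-negative matrix-valued function on an open subset of $\mathbb C^N$, where ordinary convolution with an approximate identity (Berndtsson--P\u{a}un, Lemma \ref{sequence}) produces smooth Griffiths negative approximants; Lemma \ref{convolution} tracks the loss of the strict bound via $\omega'\le c_{Z'}\omega$ on relatively compact subsets $Z'\Subset Z$. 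One then solves $\dbar$ on an exhaustion $Z_j$ with constants $\delta(1-1/c_j)n$, takes weak limits in $\nu$ and in $j$, and extends across $X\setminus Z$ by the $L^2$ removability lemma. If you want to salvage your write-up, you should replace the $\mathbb P(E)$ regularization by this local-trivialization-plus-retraction convolution (your complete-metric argument on $X\setminus S$ can then be kept, or replaced by the paper's Stein exhaustion); note also that the approximants you obtain this way live only over $Z$, not over all of $X$, which is why the extension-by-zero step at the end is needed.
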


Roughly speaking, the singular Hermitian metric $h$ is strictly Griffiths $\delta$-positive (or simply strictly Griffiths positive)
if the curvature current $\Theta_{h^\star}$ of the dual Hermitian vector bundle $(E^\star, h^\star)$ satisfies the following inequality
$$
\sum_{j, k=1}^n (\Theta_{j\overline{k}}^{h^\star} s, s)_h\xi_j \overline{\xi}_k \le -\delta \| s \|^2_{h^\star} |\xi|^2
$$
in the sense of
distributions for any section $s$ of $E^\star$ and any vector field $\xi = \sum \xi_j \frac{\partial}{\partial z_j}$,
where $\sqrt{-1}\Theta_h^\star = \sum_{j, k=1}^n \Theta_{j\overline{k}}^{h^\star} dz_j\wedge d\overline{z}_k$ (see Definition \ref{sGp}).
This type of theorem was announced by H.~Raufi \cite{Rau2} in the case $n=1$. In \cite{Rau2}, $L^2$ estimates
are established on Riemann surfaces for the reason that Griffiths positivity of vector bundles coincides with Nakano positivity
of those on Riemann surfaces. As we have said above, we allow the dimension of the base manifold $X$ to be larger than $1$.
As an application of Theorem \ref{mainthm1}, we have the next statement.

\begin{corollary}\label{maincor1}
Let $h$ be a Griffiths semi-positive singular Hermitian metric such that $h$ is strictly Griffiths $\delta_\omega$-positive on $X\setminus S$, 
and $K_X$ be the canonical bundle of $X$.
If the Lelong number $\nu(-\log \det h, x)<2$  for all points $x\in X$,
the $n$-th cohomology group of $X$ with coefficients in the sheaf of holomorphic sections of $K_X\otimes E$ vanishes $\colon$
$$
H^n(X, K_X\otimes E)=0 .
$$
\end{corollary}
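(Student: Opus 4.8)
The plan is to derive the vanishing from Theorem \ref{mainthm1} via the Dolbeault isomorphism, with Skoda's integrability lemma supplying the only non-formal ingredient.

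By the Dolbeault isomorphism, $H^n(X,K_X\otimes E)\cong H^{n,n}_{\dbar}(X,E)$, the latter being computed by the complex of smooth $E$-valued $(n,\bullet)$-forms. As $(n,n)$ is the top bidegree, \emph{every} smooth $E$-valued $(n,n)$-form $f$ is $\dbar$-closed, so it suffices to show each such $f$ is $\dbar$-exact. Provided $\int_X|f|^2_{h,\omega}\,dV_\omega<\infty$, Theorem \ref{mainthm1} produces an $E$-valued $(n,n-1)$-form $g$, a priori only square-integrable, with $\dbar g=f$ in the sense of currents. Since $f$ is smooth and the smooth and current Dolbeault complexes are both fine resolutions of $K_X\otimes E$, this forces $[f]=0$ in $H^{n,n}_{\dbar}(X,E)$; as $f$ was arbitrary, $H^n(X,K_X\otimes E)=0$.

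The whole content is therefore the finiteness of the weighted $L^2$-norm of an arbitrary smooth $f$, and this is where the bound on Lelong numbers enters. Fix a trivializing chart $U$ for $E$ with holomorphic frame $(e_1,\dots,e_r)$ and dual frame $(e_1^\star,\dots,e_r^\star)$, and let $H=(h_{j\bar k})$ be the matrix of $h$, so that $H^{-1}$ is the matrix of $h^\star$. Since $h$ is Griffiths semi-positive, $h^\star$ is Griffiths semi-negative, so for every constant section $v=\sum v_je_j^\star$ of $E^\star$ over $U$ the function $\log|v|^2_{h^\star}$ is plurisubharmonic, hence locally bounded above; applying this to the basis vectors and using the Cauchy--Schwarz inequality for the positive matrix $H^{-1}$ gives $H^{-1}\le C_0\,\mathrm{Id}$ on any $U'\Subset U$, i.e.\ all eigenvalues of $H$ are $\ge 1/C_0$ there. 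Hence the largest eigenvalue of $H$ satisfies $\lambda_{\max}(H)\le C_0^{\,r-1}\det H$ on $U'$, and therefore, for a smooth $E$-valued $(n,n)$-form $f$,
$$
|f|^2_{h,\omega}\;\le\;\lambda_{\max}(H)\,|f|^2_{\mathrm{sm}}\;\le\;C_0^{\,r-1}\,|f|^2_{\mathrm{sm}}\,\det H\qquad\text{on }U',
$$
where $|f|^2_{\mathrm{sm}}$ denotes the squared norm of $f$ in a fixed smooth Hermitian metric, a bounded function on $U'$. On the other hand, Griffiths semi-positivity of $h$ makes $\det h$ a semi-positively curved singular metric on $\det E$ (cf.\ \cite{Rau1}), so $-\log\det h$ is plurisubharmonic on $U$, and all its Lelong numbers are $<2$ by hypothesis; Skoda's integrability lemma then gives $\det h\in L^1_{\mathrm{loc}}(U')$. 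Covering $X$ by finitely many such charts $U'$ yields $\int_X|f|^2_{h,\omega}\,dV_\omega<\infty$.

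The main obstacle is precisely this estimate: controlling $|f|^2_{h,\omega}$ by an integrable function across the locus $\{\det h=+\infty\}\subset S$ where the metric degenerates. Two points need genuine care in the singular setting: (i) the pointwise comparison $|f|^2_{h}\lesssim(\det h)\,|f|^2_{\mathrm{sm}}$, which fails for a general metric and holds here only because Griffiths semi-positivity bounds $h^{-1}$ from above; and (ii) the passage from $\nu(-\log\det h,\cdot)<2$ to $\det h\in L^1_{\mathrm{loc}}$, which is Skoda's lemma applied to the plurisubharmonic weight of the induced metric on $\det E$. With these established, combining Theorem \ref{mainthm1} with the Dolbeault isomorphism as above completes the proof; it remains only to check that the plurisubharmonicity assertions invoked for $h^\star$ and for $\det h$ are exactly those recorded in Definition \ref{GrifPos} and the cited results, so that no regularity of $h$ beyond condition $({\rm I})$ is tacitly assumed.
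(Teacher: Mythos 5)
Your proposal is correct and follows essentially the same route as the paper: bound $|f|^2_{h,\omega}$ by (a constant times) $\det h\cdot|f|^2_{\mathrm{sm}}$ using the local upper bound on $h^\star$ coming from Griffiths semi-positivity (the paper phrases this via $h=\frac{1}{\det h^\star}\what{h}^\star$ with $\what{h}^\star$ locally bounded, you via eigenvalues of $H$), invoke Skoda's lemma with $\nu(-\log\det h,\cdot)<2$ to get $\det h\in L^1_{loc}$, and then solve $\dbar g=f$ by Theorem \ref{mainthm1} and conclude via a Dolbeault-type isomorphism (the paper uses Ohsawa's $L^2_{loc}$ isomorphism where you compare the smooth and current complexes, an immaterial difference).
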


Here $-\log \det h$ is locally plurisubharmonic.
Then the Lelong number $\nu(-\log \det h, x)$ is naturally defined (see Definition \ref{linelelong}).

We show another $L^2$ estimate. Let $h$ be a smooth
Hermitian metric on $E$. If $(E,h)$ is positive in the sense of Griffiths,
$(E\otimes \det E, h\otimes \det h)$ is positive in the sense of Nakano.
This is a well-known result of \cite{DSk}. As described above, Nakano positivity of
a singular Hermitian metric has not been defined. However, we show some $L^2$ estimate
by applying this result to a vector bundle equipped with a singular Hermitian metric.

\begin{theorem}\label{mainthm2}
Let $h$ be a Griffiths semi-positive singular Hermitian metric  on $E$ such that $h$ is strictly Griffiths $\delta_\omega$-positive on $X\setminus S$. 
Suppose that $q$ is a positive integer, $f$ is an $E\otimes \det E$-valued $\dbar $-closed $(n,q)$-form with finite $L^2$ norm.
Then there is an $E\otimes {\rm det}E$-valued $(n,q-1)$-form $g$ such that

$$
\dbar g=f, \hspace{5mm}\int_X | g |^2_{h\tensor h,\omega} dV_\omega \le \frac{1}{\delta qr}\int_X | f|^2_{h\tensor h,\omega} dV_\omega .
$$
\end{theorem}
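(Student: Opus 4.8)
The plan is to run the scheme behind Theorem~\ref{mainthm1}, feeding into it the Demailly--Skoda theorem \cite{DSk} so that after twisting by $\det E$ the relevant bundle becomes \emph{Nakano} positive and the Bochner--Kodaira--Nakano machinery applies to $(n,q)$-forms for every $q$. Write $X^\circ:=X\setminus S$. By the condition $({\rm I})$, $h$ is continuous and strictly Griffiths $\delta$-positive on $X^\circ$; since $X$ is projective and $S$ is a proper closed analytic subset, standard constructions provide a family of complete K\"ahler metrics $\omega_\varepsilon\ge\omega$ on $X^\circ$ that decrease to $\omega$ as $\varepsilon\downarrow0$. Moreover, Griffiths semipositivity of $h$ forces $h^\star$ to be locally bounded above on all of $X$, hence $h$, and a fortiori $h\otimes\det h$, to be locally bounded below by a smooth Hermitian metric on $X$; this will be used for the extension across $S$.

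First I would regularize $h$ on $X^\circ$: produce smooth Hermitian metrics $h_\nu$ on $E|_{X^\circ}$ increasing to $h$ and strictly Griffiths $\delta_\nu$-positive with $\delta_\nu\uparrow\delta$ (equivalently, smooth duals $h_\nu^\star$ decreasing to $h^\star$, Griffiths negative with the matching quantitative margin). I expect this to be the main obstacle: $h$ is only continuous, and plain convolution of the matrix of $h$ need not preserve Griffiths positivity, so one instead regularizes $h^\star$ through the characterization of Griffiths seminegativity by plurisubharmonicity of $\log|u|_{h^\star}$ along local holomorphic sections $u$ of $E^\star$, patches with a partition of unity, and uses that strict $\delta$-positivity is stable under small smooth perturbations to keep the margin essentially intact.

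Next I would apply the Demailly--Skoda theorem to the smooth bundle $(E|_{X^\circ},h_\nu)$: being strictly Griffiths $\delta_\nu$-positive, $(E\otimes\det E,\,h_\nu\otimes\det h_\nu)$ is Nakano positive, and tracking constants gives a quantitative bound of the form
$$
\sqrt{-1}\,\Theta_{E\otimes\det E,\,h_\nu\otimes\det h_\nu}\ \ge_{\rm Nak}\ \delta_\nu\, r\,\omega\otimes{\rm Id},
$$
the factor $\delta_\nu r$ coming from the trace over the $r$ directions of $E$ carried by $\det E$. On the complete K\"ahler manifold $(X^\circ,\omega_\varepsilon)$ the Bochner--Kodaira--Nakano inequality and H\"ormander's $L^2$ method then yield, for the $\dbar$-closed $(n,q)$-form $f$, a solution $g_{\nu,\varepsilon}$ of $\dbar g_{\nu,\varepsilon}=f$ with
$$
\int_{X^\circ}|g_{\nu,\varepsilon}|^2_{h_\nu\otimes\det h_\nu,\,\omega_\varepsilon}\,dV_{\omega_\varepsilon}\ \le\ \frac{1}{\delta_\nu q r}\int_{X^\circ}|f|^2_{h_\nu\otimes\det h_\nu,\,\omega}\,dV_\omega ,
$$
the extra factor $q$ coming from $[\,\sqrt{-1}\,\Theta,\Lambda_{\omega_\varepsilon}]$ on $(n,q)$-forms, and the passage from $\omega_\varepsilon$ to $\omega$ on the right being legitimate because $f$ has bidegree $(n,q)$ and $\omega_\varepsilon\ge\omega$ (one crucially uses the curvature bound relative to the fixed $\omega$, not $\omega_\varepsilon$).

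Finally I would pass to the limit. Letting $\varepsilon\downarrow0$ for fixed $\nu$ and then $\nu\to\infty$, the uniform $L^2$ bounds permit extraction of weakly convergent subsequences, and a standard diagonal and lower-semicontinuity argument, using that $h_\nu$ increases to $h$ and that $h\otimes\det h$ is bounded below by a smooth metric, produces a form $g$ on $X^\circ$ with $\dbar g=f$ and
$$
\int_{X^\circ}|g|^2_{h\otimes\det h,\,\omega}\,dV_\omega\ \le\ \frac{1}{\delta q r}\int_{X^\circ}|f|^2_{h\otimes\det h,\,\omega}\,dV_\omega .
$$
Since $S$ has Lebesgue measure zero and $g\in L^2_{\rm loc}(X)$ with respect to a smooth metric, a Riemann-type extension lemma for the $\dbar$-equation across the proper analytic subset $S$ (multiply by cutoffs supported off $S$ increasing to $1$, with $\dbar$ of the cutoffs negligible) shows $\dbar g=f$ on all of $X$ in the sense of distributions; as $\int_X=\int_{X^\circ}$, this is the claimed estimate. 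Besides the regularization, the two remaining points needing care are pinning down the Demailly--Skoda constant and propagating the sharp constant $\tfrac1{\delta q r}$ through the limit $\omega_\varepsilon\to\omega$.
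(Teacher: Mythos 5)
Your overall skeleton matches the paper's: regularize $h$ by smooth strictly Griffiths positive metrics, invoke Demailly--Skoda to convert strict Griffiths $\delta$-positivity into strict $\delta r$-Nakano positivity of $h_\nu\otimes\det h_\nu$ (the paper's Lemma \ref{nakano}), solve $\dbar$ with H\"ormander's estimate picking up the factor $\frac{1}{\delta q r}$, pass to weak limits, and extend across the analytic set by a removable-singularity lemma for $\dbar$ (Lemma \ref{distribution}). The constants you track are the right ones.

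However, the step you yourself flag as ``the main obstacle'' is a genuine gap, and your proposed fix does not work. There is no known global approximation of a Griffiths semi-positive singular Hermitian metric on all of $X\setminus S$ by smooth Griffiths positive metrics; the Berndtsson--P\u{a}un result (Lemma \ref{sequence}) is strictly local, valid only for a trivial bundle over a polydisk. Patching local approximants $h_i^\star$ with a partition of unity $\{\phi_i\}$ destroys Griffiths seminegativity: for a holomorphic section $u$ one has $|u|^2_{\sum\phi_i h_i^\star}=\sum\phi_i|u|^2_{h_i^\star}$, and $\sqrt{-1}\ddbar$ of this contains the terms $\sqrt{-1}\ddbar\phi_i\cdot|u|^2_{h_i^\star}$ and $\sqrt{-1}\,\partial\phi_i\wedge\dbar|u|^2_{h_i^\star}+\mathrm{c.c.}$, which have no sign and are not controlled by the margin $\delta$ (the forms $\dbar|u|^2_{h_i^\star}$ are merely $L^2_{loc}$, so these errors are not ``small smooth perturbations''). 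This is precisely the difficulty the paper is designed to circumvent: it does \emph{not} regularize on all of $X\setminus S$, but shrinks to a nonempty Zariski-open \emph{Stein} subset $Z\subset X\setminus S$ on which $E$ is trivial, properly embeds $Z$ in $\mathbb{C}^N$, uses Siu's theorem to obtain a neighborhood $U$ with a holomorphic retraction $p\colon U\to Z$, and performs a single global convolution of $p^\star h^\star$ on $U$ (Lemma \ref{convolution}), which preserves Griffiths negativity of the dual and, after a quantitative comparison using continuity of $h$, retains strict positivity $\delta(1-c_{Z_j}\epsilon)$ on each relatively compact $Z_j$. The discarded set $X\setminus Z$ is still a proper analytic subset, so the final extension step absorbs it. Your complete-K\"ahler-metric device $\omega_\varepsilon$ on $X\setminus S$ is an acceptable substitute for the paper's Stein exhaustion $\{Z_j\}$, but without the trivialization-plus-retraction trick (or some other genuinely global regularization) the argument cannot get started.
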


As is the case in Corollary \ref{maincor1}, we have the following statement.

\begin{corollary}\label{maincor2}
Let $h$ be a Griffiths semi-positive singular Hermitian metric  on $E$ such that $h$ is strictly Griffiths $\delta_\omega$-positive on $X\setminus S$. 
If the Lelong number $\nu(- \log \det h, x)<1$  for all points $x\in X$, then
$$
H^q(X, K_X\otimes E\otimes \det E)=0 .
$$
\end{corollary}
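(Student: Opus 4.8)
The plan is to deduce Corollary~\ref{maincor2} from Theorem~\ref{mainthm2} in exactly the way Corollary~\ref{maincor1} is deduced from Theorem~\ref{mainthm1}: a Dolbeault-type argument in which the $L^2$ solution produced by Theorem~\ref{mainthm2} is read as a solution in the space of currents, so that it annihilates the given cohomology class. First I would recall that the sheaf $\mathcal{O}(K_X\otimes E\tensor E)$ admits a fine resolution by the sheaves of smooth $E\tensor E$-valued $(n,\bullet)$-forms, and also one by the sheaves of $E\tensor E$-valued $(n,\bullet)$-currents, and that both compute $H^q(X,K_X\otimes E\tensor E)$ compatibly. It therefore suffices to show that every \emph{smooth} $\dbar$-closed $E\tensor E$-valued $(n,q)$-form $f$ is $\dbar$-exact in the space of currents; as every class in $H^q(X,K_X\otimes E\tensor E)$ is represented by such an $f$, this gives the vanishing.

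Granting for the moment that such an $f$ has finite $L^2$-norm with respect to $h\tensor h$ and $\omega$, Theorem~\ref{mainthm2} yields a form $g$ with $\dbar g=f$ and $\int_X|g|^2_{h\tensor h,\omega}\,dV_\omega<\infty$. Since $h$ is Griffiths semi-positive, $h^\star$ is Griffiths semi-negative, so in any local holomorphic frame $(e_j^\star)$ of $E^\star$ the functions $\log|e_j^\star|^2_{h^\star}$ are plurisubharmonic, in particular locally bounded above; by Cauchy--Schwarz this forces $h^\star\le C\,\mathrm{Id}$ locally, i.e.\ the eigenvalues of $h$ are bounded below locally, hence, $X$ being compact, by a uniform constant $c>0$. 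Consequently $h\tensor h$ dominates a fixed smooth metric up to a positive constant, so $g$ belongs to $L^2_{\mathrm{loc}}(X)$ for a smooth metric; in particular $g$ is locally integrable, defines a current, and $\dbar g=f$ holds in the sense of currents. Hence $[f]=0$ and $H^q(X,K_X\otimes E\tensor E)=0$.

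It remains to verify $\int_X|f|^2_{h\tensor h,\omega}\,dV_\omega<\infty$, which is the heart of the matter and the place where the Lelong-number hypothesis is used. Fix a smooth Hermitian metric on $E$ and let $\lambda_1\le\cdots\le\lambda_r$ denote the eigenvalues of $h$ relative to it, so $\det h\asymp\prod_j\lambda_j$ locally; then the largest eigenvalue of $h\tensor h$ on $E\tensor E$ relative to the induced smooth metric is $\lambda_r\prod_j\lambda_j$, and from $\lambda_j\ge c$ one gets $\lambda_r\le c^{-(r-1)}\prod_j\lambda_j$, whence a pointwise bound $|f|^2_{h\tensor h,\omega}\le C(\det h)^2$ with $C$ controlled by $\omega$, $c$, and the fixed smooth data in which the smooth form $f$ is bounded. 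Since $X$ is compact it thus suffices to see that $(\det h)^2$ is locally integrable. Setting $\psi=-\log\det h$, which is plurisubharmonic as recorded after Corollary~\ref{maincor1}, we have $(\det h)^2=e^{-2\psi}$ and $\nu(2\psi,x)=2\,\nu(-\log\det h,x)<2$ for every $x\in X$ by hypothesis, so Skoda's integrability theorem gives $e^{-2\psi}\in L^1$ near each point, whence $\int_X(\det h)^2\,dV_\omega<\infty$.

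The step I expect to be the main obstacle is this last one, and in particular its validity near $S$: one must ensure that the lower bound $\lambda_j\ge c$ coming from the Griffiths semi-negativity of $h^\star$ holds up to and on $S$, which it does because the plurisubharmonic functions in play are defined on the entire coordinate chart and a plurisubharmonic function is locally bounded above even along its polar set, and that the passage from the pointwise Lelong-number bound to the global integrability of $(\det h)^2$ is legitimate, which follows from compactness of $X$. The remaining ingredients, namely the identification of current $\dbar$-cohomology with sheaf cohomology and the reduction to smooth representatives, are standard and parallel the proof of Corollary~\ref{maincor1}.
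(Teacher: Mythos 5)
Your proposal is correct and follows essentially the same route as the paper: Skoda's integrability theorem applied to the plurisubharmonic function $-\log\det h$ (equivalently $\log\det h^\star$) gives $(\det h)^2=1/\det^{2}h^\star\in L^1_{loc}$, which together with the local boundedness of $h^\star$ shows that smooth $E\otimes\det E$-valued $(n,q)$-forms have finite $L^2$-norm with respect to $h\otimes\det h$, and then Theorem \ref{mainthm2} kills every Dolbeault class. The only cosmetic differences are that you bound $h\otimes\det h$ by its largest eigenvalue rather than via the adjugate-matrix identity $h\otimes\det h=\frac{1}{\det^{2}h^\star}\what{h}^\star$, and you pass through the resolution by currents rather than the $L^2_{loc}$-Dolbeault isomorphism the paper cites.
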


Corollary \ref{maincor2} is a generalization of the so-called Griffiths vanishing theorem in the smooth setting (cf. \cite[Chapter VII, Corollary 9.4]{DemCom}).
For a (strictly) Griffiths positive singular Hermitian metric $h$, we cannot conclude that $h\otimes \det h$ is (strictly) Nakano positive for the reason that the definition of Nakano positive singular Hermitian metrics has not been formulated.
However, $h\otimes \det h$ behaves like a Nakano positive metric as in Theorem \ref{mainthm2} and Corollary \ref{maincor2}.
Corollary \ref{maincor2} can be used to discriminate the existence of a Griffiths positive singular
Hermitian metric on a certain vector bundle. We have the following example.

\begin{example}\label{exampleshm}
Let $V$ be a complex vector space of dimension $n+1$, and $X=P(V):=(V\setminus \{ 0 \})/\mathbb{C}^\star$
be the projective space of $V$.
We also let $\mathcal{O}(-1)$ denote the
tautological line bundle, and $Q$ denote the
quotient bundle $V/\mathcal{O}(-1)$. Then there do not exist any Griffiths semi-positive singular Hermitian metrics 
satisfying the strict Griffiths $\delta_\omega$-positivity outside some proper analytic subset $S$ and $\nu(- \log \det h, x)<1$ for all $x\in X$ on $Q$.

\end{example}

It is known that the above vector bundle $Q$ has a Griffiths semipositive smooth Hermitian metric, but
does not have any strictly Griffiths positive ones (cf. \cite[Chapter VII, Example 8.4]{DemCom}). Roughly speaking, Example \ref{exampleshm} asserts that there
cannot exist any strictly Griffiths positive Hermitian metrics even if they are (mildly) singular.
The criterion which determines whether a Griffiths positive singular Hermitian metric of a vector bundle exist
is one of important applications of Corollary \ref{maincor2}.

The organization of this paper is as follows. In Section \ref{prelim}, we explain
basic definitions and properties of singular Hermitian metrics on vector bundles.
In Section \ref{sHmsection}, we prepare some lemmas and properties about Griffiths positive or negative
singular Hermitian metrics on vector bundles.
In Section \ref{proofthm1} and \ref{proofthm2}, we prove the main theorems and corollaries.

\vskip3mm
{\bf Acknowledgment. }
The author would like to thank his supervisor Prof. Shigeharu Takayama
for inspiring and helpful comments. 
He is also grateful to anonymous referees for their suggestions to improve the manuscript.

\section{Singular Hermitian metrics on vector bundles}\label{prelim}
In this section, we introduce a definition and property of a singular Hermitian metric on a vector bundle.
To start with, we refer to basic notions of smooth Hermitian metrics on vector bundles. Throughout this section,
$X$ denotes a complex manifold with a positive Hermitian form $\omega$, and $E$ denotes a holomorphic vector bundle over $X$.

\subsection{Positivity concepts of smooth Hermitian metrics on vector bundles}

Let $h$ be a smooth Hermitian metric on $E$ and the Chern curvature of $(E, h)$ be $\Theta_{E, h}$ or simply $\Theta$.
The curvature $\Theta_{E, h}$ is a ${\rm Hom}(E, E)$ valued $(1, 1)$ form, thus let $(z_1,\ldots z_n)$ be holomorphic coordinates on $X$ around $p$ and
let $(e_1,\ldots e_r)$ be an orthonormal frame of $E$.
By referring to \cite[(3.8)]{DemAna}, writing
$$
\sqrt{-1}\Theta_{E, h}= \sum_{1\le j, k\le n, 1\le \lambda, \mu \le r} c_{j \overline{k} \lambda \mu}dz_j\wedge d\overline{z}_k \otimes e_\lambda^\star \otimes e_\mu ,
$$
we can identify the curvature tensor to a Hermitian form
$$
\Theta_{E, h}(\xi \otimes v, \xi \otimes v)=\sum_{1\le j, k\le n, 1\le \lambda, \mu \le r} c_{j \overline{k} \lambda \mu}\xi_j \overline{\xi}_k v_\lambda \overline{v}_\mu
\hspace{5mm}(\xi = \sum_{j=1}^n \xi_{j} \frac{\partial}{\partial z_j} \in T_X, v=\sum_{\lambda =1}^r v_\lambda e_\lambda \in E)
$$
on $T_X\otimes E$. This leads to positivity concepts.

\begin{definition}
The smooth Hermitian vector bundle $(E, h)$ is \\
(1) {\it Griffiths semi-positive} (resp. {\it Griffiths positive}) if $\Theta_{E, h}(\xi \otimes v, \xi \otimes v) \ge 0$ (resp. $>0$)
for every local non-zero section $\xi \in T_X , v\in E$. We denote it by $\Theta_{E, h}\ge_{\rm Grif} 0$ (resp. $\Theta_{E, h}>_{\rm Grif} 0$). \\
(2) {\it Nakano semi-positive} (resp. {\it Nakano positive}) if $\Theta_{E, h}(\tau, \tau) \ge 0$ (resp. $>0$)
for all non-zero tensors $\tau = \sum \tau_{j\lambda} \frac{\partial}{\partial z_j}\otimes e_\lambda \in T_X \otimes E$.
We denote it by $\Theta_{E, h} \ge_{\rm Nak} 0$ (resp. $\Theta_{E, h} >_{\rm Nak} 0$).
\end{definition}

If we consider reverse inequalities in the above definition, we can get a definition of Griffiths and Nakano
negativity of the smooth Hermitian vector bundle.

It is known that Griffiths positivity and negativity can be defined without using the curvature current. This characterization and the following
property will be
used in the singular setting.

\begin{proposition} $($\cite[Section 2]{Rau1}$)$.
Let $h$ be a smooth Hermitian metric on $E$. Then the followings are equivalent. \\
(1) $h$ is Griffiths semi-negative. \\
(2) $|u|^2_h$ is plurisubharmonic for every local holomorphic section of $E$. \\
(3) $\log |u|^2_h$ is plurisubharmonic for every local holomorphic section of $E$. \\
(4) $h^\star$ is Griffiths semi-positive $( h^\star$ is the dual metric on the dual bundle $E^\star )$.
\end{proposition}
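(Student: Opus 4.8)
The plan is to prove the chain of equivalences $(1)\Rightarrow(3)\Rightarrow(2)\Rightarrow(1)$ together with $(1)\Leftrightarrow(4)$, exploiting that all the statements are local and that $h$ is smooth, so the curvature tensor $\Theta_{E,h}$ is an honest smooth $(1,1)$-form with values in $\mathrm{Hom}(E,E)$ and one may compute pointwise in a holomorphic frame. First I would fix a point $p\in X$, holomorphic coordinates $(z_1,\dots,z_n)$ centered at $p$, and a local holomorphic frame $(e_1,\dots,e_r)$ of $E$ which is normal at $p$, i.e. such that the connection matrix vanishes at $p$ and $h_{\lambda\mu}(p)=\delta_{\lambda\mu}$; the existence of such a frame is standard. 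For a local holomorphic section $u=\sum u_\lambda e_\lambda$ one then computes
\[
\ddbar |u|^2_h = \sum h_{\lambda\mu}\,\partial u_\lambda\wedge\dbar\overline{u_\mu} \;-\; \langle \Theta_{E,h}\, u, u\rangle_h,
\]
where the sign convention is the usual one; evaluating at $p$ with the normal frame, the first term is a sum of terms $|\partial u_\lambda|^2$ which is a non-negative $(1,1)$-form, and the second term is exactly the Griffiths curvature expression $-\sum c_{j\overline k\lambda\mu}u_\lambda\overline{u_\mu}\,dz_j\wedge d\overline z_k$ evaluated on $u(p)$. This identity is the computational core from which everything follows.

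For $(1)\Rightarrow(3)$: assuming $h$ Griffiths semi-negative means $\Theta_{E,h}(\xi\otimes v,\xi\otimes v)\le 0$ for all $\xi,v$, so $-\langle\Theta_{E,h}u,u\rangle_h\ge 0$ as a $(1,1)$-form, hence $\ddbar\log|u|^2_h = \frac{\ddbar|u|^2_h}{|u|^2_h} - \frac{\partial|u|^2_h\wedge\dbar|u|^2_h}{|u|^4_h}\ge 0$ by a Cauchy–Schwarz argument on the $\partial u_\lambda$ terms (the classical computation showing $\log$ of the squared norm of a holomorphic section of a Griffiths-negative bundle is plurisubharmonic), away from the zero set of $u$; near points where $u$ vanishes one uses that $\log|u|^2_h$ is bounded above by a smooth function and extends as a plurisubharmonic function. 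Then $(3)\Rightarrow(2)$ is immediate since $e^t$ is convex increasing, so $|u|^2_h = e^{\log|u|^2_h}$ is plurisubharmonic whenever $\log|u|^2_h$ is. For $(2)\Rightarrow(1)$: if $|u|^2_h$ is plurisubharmonic for every local holomorphic $u$, then $\ddbar|u|^2_h\ge 0$; evaluating at an arbitrary point $p$ in a normal frame and choosing, for a prescribed $v\in E_p$, a holomorphic section $u$ with $u(p)=v$ and $\partial u_\lambda(p)$ prescribed freely, one can make the non-negative term $\sum|\partial u_\lambda|^2$ as small as we like in a fixed direction $\xi$ (e.g. choose $u$ constant in the frame, so $\partial u_\lambda(p)=0$), forcing $-\langle\Theta_{E,h}v,v\rangle_h\ge 0$ in that direction, which is Griffiths semi-negativity. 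Finally $(1)\Leftrightarrow(4)$ follows from the curvature identity $\Theta_{E^\star,h^\star}=-\,{}^t\Theta_{E,h}$ for the dual bundle, under which the Griffiths form of $(E^\star,h^\star)$ on $\xi\otimes v^\star$ equals minus the Griffiths form of $(E,h)$ on $\xi\otimes v$ for the corresponding $v$; hence one is $\ge 0$ iff the other is $\le 0$.

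The main obstacle, and the point requiring the most care, is the behavior of $\log|u|^2_h$ at the zeros of $u$ in the implication $(1)\Rightarrow(3)$: one must verify that the locally defined plurisubharmonic function on $\{u\neq 0\}$ extends across the analytic (or merely closed) zero locus to a genuine plurisubharmonic — and in particular upper semicontinuous, not identically $-\infty$ — function on the whole coordinate patch. This is handled by the standard removable-singularity theorem for plurisubharmonic functions: $\log|u|^2_h$ is bounded above locally (since $|u|^2_h$ is continuous), so it extends to a plurisubharmonic function, and one should note the extension agrees with $\limsup$ from the complement. The rest of the argument is the routine pointwise linear algebra in a normal frame together with the elementary convexity fact $e^t$ preserves plurisubharmonicity; I would present the normal-frame curvature identity once and reuse it for both directions.
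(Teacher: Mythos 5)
The paper does not prove this proposition; it is quoted from \cite[Section 2]{Rau1}, where the argument is exactly the one you give (the identity $\ddbar|u|^2_h=\langle D'u,D'u\rangle_h-\langle\Theta_{E,h}u,u\rangle_h$ in a normal frame, Cauchy--Schwarz for the $\log$ statement, the removable-singularity extension across the zero set of $u$, and the duality formula $\Theta_{E^\star,h^\star}=-{}^t\Theta_{E,h}$). Your proposal is correct and coincides with the standard proof in the cited reference.
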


\subsection{Singular Hermitian metrics on vector bundles}

We adopt the definition of singular Hermitian metrics introduced in \cite{HPS}, \cite{PT}, and \cite{Rau1}.

\begin{definition} $($\cite[Section 3]{BP}, \cite[Definition 17.1]{HPS}, \cite[Definition 2.2.1]{PT} and \cite[Definition 1.1]{Rau1}$)$.
A {\it singular Hermitian metric} $h$ on $E$ is a measurable map from the base manifold $X$ to
the space of non-negative Hermitian forms on the fibers satisfying $0< {\rm det} h < +\infty $ almost everywhere.
\end{definition}

The (Chern) curvature current $\Theta_h$ of a singular Hermitian metric $h$ is locally defined as $\Theta_h = \dbar (h^{-1}\partial h)$.
However, its coefficients are not always measure. This example is found by Raufi in \cite[Theorem 1.5]{Rau1}.
For this reason, we cannot define  the positivity of a singular Hermitian metric by using its curvature current.
The definition of Griffiths positivity and negativity that we will adopt is the following.

\begin{definition}$($\cite[Definition 3.1]{BP}, \cite[Definition 2.2.2]{PT} and \cite[Definition 1.2]{Rau1}$)$\label{GrifPos}
A singular Hermitian metric $h$ is \\
(1) {\it Griffiths semi-negative}, if for any open subset $U\subset X$ and any $u \in H^0(U, E)$,
$\log |u|_h^2$ is plurisubharmonic on $U$. \\
(1') {\it Griffiths semi-negative}, if for any open subset $U\subset X$ and any $u \in H^0(U, E)$,
$|u|_h^2$ is plurisubharmonic on $U$. \\
(2) {\it Griffiths semi-positive}, if the dual metric $h^\star$ is Griffiths semi-negative, or equivalently if
for any open subset $U\subset X$ and any $u \in H^0(U, E^\star)$, $\log |u|_{h^\star}^2$ or $|u|_{h^\star}^2$ is plurisubharmonic on $U$.

\end{definition}

The above definition (1) is equivalent to (1') (cf. \cite[Section 2]{Rau1}).
Nakano semi-negativity of a singular Hermitian metric is also defined without using the curvature current (cf. \cite[Definition 1.8]{Rau1}).
However, the dual bundle of a Nakano negative vector bundle is not necessarily Nakano positive. Hence, the Nakano (semi-)positive singular Hermitian metric has
not been defined. Furthermore, strictly positivity or negativity of a singular Hermitian metric is not generally formulated.
The following result is only known as a definition of strict Griffiths negativity of a singular Hermitian metric.

\begin{definition}$($\cite[Definition 6.1]{Rau1}$)$\label{sGp}
A singular Hermitian metric $h$ on $E$ is strictly {\it Griffiths negative} or {\it $\delta$-negative} if$\colon$ \\
(1) $h$ is Griffiths semi-negative in the sense of Definition \ref{GrifPos}. \\
(2) $F=\{z\in X \colon \det h(z)=0\}$ is a closed set, and there exists an exhaustion of open sets $\{U_j\}_{j=1}^\infty$
such that $\det h>\frac{1}{j}$ on $U_j$, and $\bigcup_{j=1}^\infty U_j = X\setminus F$. \\
(3) There exists some $\delta >0$ such that on $X\setminus F$
$$
\sum_{j, k=1}^n (\Theta_{j\overline{k}}^h s, s)_h\xi_j \overline{\xi}_k \le - \delta \| s \|^2_h ~\omega (\xi, \xi)
$$
in the sense of distributions, for any section $s$ and any vector field $\xi = \sum \xi_j \frac{\partial}{\partial z_j}$ .
Here we let
$$
\sqrt{-1}\Theta_h = \sum_{j, k=1}^n \Theta_{j\overline{k}}^h dz_j\wedge d\overline{z}_k, \hspace{5mm}\Theta_{j\overline{k}}^h \in \mathcal{O}({\rm Hom}(E, E)).
$$
\end{definition}

We say that a singular Hermitian metric $h$ is {\it strictly ($\delta$-)Griffiths positive} if the dual metric is strictly Griffiths negative.
The above condition (2) certifies that the curvature current $\Theta_h$ exists as a current
with measurable coefficients on $X\setminus F$ \cite[Corollary 1.7]{Rau1}.

If $\omega$ is a K\"ahler form, we can define the strict Griffiths negativity and positivity in a more general setting. 

\begin{definition}\label{def:generalnegativity}
Let $\omega$ be a K\"ahler form. 
We say that a singular Hermitian metric $h$ is {\it strictly Griffiths $\delta_\omega$-negative} around $x\in X$ if for any open neighborhood $U$ of $x$ and any K\"ahler potential $\varphi$ of $\omega$ on $U$, i.e. $\sqrt{-1}\ddbar \varphi=\omega$ on $U$, 
$h\cdot e^{-\delta \varphi}$ is Griffiths semi-negative in the sense of Definition \ref{GrifPos}. 
We also say that $h$ is {\it strictly Griffiths $\delta_\omega$-negative} on $X$ if for any point $x\in X$, $h$ is {\it strictly Griffiths $\delta_\omega$-negative} around $x$. 

Similarily, we say that $h$ is {\it strictly Griffiths $\delta_\omega$-positive} if the dual metric $h^\star$ is strictly Griffiths $\delta_\omega$-negative, i.e. $h\cdot e^{\delta \varphi}$ is Griffiths semi-positive in the sense of Definition \ref{GrifPos}. 
\end{definition}

In the above definition, we define the strict negativity without assuming the condition (2) in Definition \ref{sGp}. 
We remark that we can find this $\varphi$ locally by $\ddbar$-lemma and the above definition is independent of the choice of local potentials. 

\section{Some properties of Griffiths positive singular Hermitian metrics}\label{sHmsection}

In this section, we prepare some lemmas and propositions about Griffiths negative or positive singular Hermitian metrics.
Some of them are not directly related to the proof of the main theorems. However, we summarize these properties
in order to improve the outlook.
Throughout this section, let $E$ be a holomorphic vector bundle on the base manifold $X$, and $h$ be
a singular Hermitian metric on $E$.

In order to study analytic properties about the curvature current of $h$, we will take an approximating sequence $\{ h_\nu \}_{\nu =1}^\infty$ of $h$.
Locally, we can take such a sequence, through convolution with an approximate identity, from the following lemma.
\begin{lemma}\cite[Proposition 3.1]{BP}\label{sequence}
Let $h$ be a Griffiths semi-negative singular Hermitian metric. If $E$ is a trivial vector bundle
over a polydisk, then there exists a sequence of smooth Hermitian metrics $\{h_\nu\}_{\nu=1}^\infty$, with negative Griffiths curvature,
decreasing pointwise to $h$ on any smaller polydisk.
\end{lemma}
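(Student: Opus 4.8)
The plan is to obtain the $h_\nu$ by mollifying $h$ inside a fixed trivialization. First I would fix the trivialization $E\cong \Delta^n\times\mathbb{C}^r$ and regard $h$ as a measurable field $z\mapsto H(z)$ of non-negative Hermitian $r\times r$ matrices, so that for a constant vector $a\in\mathbb{C}^r$ the function $z\mapsto|a|^2_{h(z)}$ is the associated quadratic form. Applying Griffiths semi-negativity to the constant holomorphic sections $u\equiv a$ shows that $z\mapsto|a|^2_{h(z)}$ is plurisubharmonic for every $a$; polarizing (using $a=e_\lambda,\ e_\lambda+e_\mu,\ e_\lambda+\sqrt{-1}e_\mu$) shows that the real and imaginary parts of each entry of $H$ are differences of plurisubharmonic functions, and since $\det H>0$ almost everywhere none of these is $\equiv-\infty$; hence $H\in L^1_{\mathrm{loc}}(\Delta^n)$. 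I would then pick a radial mollifier $\rho\in C_c^\infty(\mathbb{C}^n)$ with $\rho\ge 0$, $\int\rho=1$, supported in the unit ball, set $\rho_\varepsilon(w)=\varepsilon^{-2n}\rho(w/\varepsilon)$, and define $H_\varepsilon:=H*\rho_\varepsilon$ entrywise on $\Delta^n_{1-\varepsilon}$. Each $H_\varepsilon$ is smooth, and for $a\ne 0$ one has $|a|^2_{H_\varepsilon}(z)=\int |a|^2_{h(z-w)}\,\rho_\varepsilon(w)\,dV(w)>0$, because the integrand is non-negative and strictly positive for a.e.\ $w$ (there $h(z-w)$ is positive definite); thus $H_\varepsilon$ is a genuine smooth Hermitian metric.

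The crux is to check that each $H_\varepsilon$ has negative Griffiths curvature, i.e.\ that $z\mapsto|u(z)|^2_{H_\varepsilon}$ is plurisubharmonic for every holomorphic section $u$. Since this function is smooth, it is enough to prove it is subharmonic along an arbitrary complex line $\zeta\mapsto z_0+\zeta v$. I would write
$$
|u(z_0+\zeta v)|^2_{H_\varepsilon}=\int_{\mathbb{C}^n}|u_w|^2_{h(z_0+\zeta v-w)}\,\rho_\varepsilon(w)\,dV(w),\qquad u_w(z):=u(z+w),
$$
where I have used that $u_w(z_0+\zeta v-w)=u(z_0+\zeta v)$. For each fixed $w$ the section $u_w$ is holomorphic, so $\zeta\mapsto|u_w|^2_{h(z_0+\zeta v-w)}$ is the restriction of the plurisubharmonic function $|u_w|^2_{h}$ to a complex line, hence subharmonic; therefore $|u(z_0+\zeta v)|^2_{H_\varepsilon}$ is a non-negatively weighted integral of subharmonic functions of $\zeta$, and so subharmonic. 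This translation trick is the one place the argument is not entirely routine: the point is that convolving in $z$ can be absorbed into a holomorphic reparametrization of the section, so Griffiths semi-negativity survives even though $u$ is not constant.

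Finally, I would record that, $\rho$ being radial, for any subharmonic $\psi$ the functions $\psi*\rho_\varepsilon$ decrease to $\psi$ as $\varepsilon\downarrow 0$ (write the convolution as a superposition of solid-ball averages and use that ball averages of subharmonic functions increase with the radius). Applied to $\psi=(z\mapsto|a|^2_{h(z)})$ for each $a\in\mathbb{C}^r$, this gives $|a|^2_{H_\varepsilon}(z)\downarrow|a|^2_{h(z)}$ as $\varepsilon\downarrow0$, which is exactly the assertion that the Hermitian forms $H_\varepsilon(z)$ decrease to $h(z)$ at every point. To conclude, fix a smaller polydisk $\Delta^n_r$ with $r<1$, choose $\varepsilon_\nu\downarrow 0$ with $\varepsilon_1<1-r$, and set $h_\nu:=H_{\varepsilon_\nu}$: this is a decreasing sequence of smooth Hermitian metrics on $\Delta^n_r$ with negative Griffiths curvature converging pointwise to $h$, as wanted. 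I expect the verification of the curvature property (second paragraph) to be the main obstacle; the positivity and the monotone convergence are standard facts about plurisubharmonic functions and mollification.
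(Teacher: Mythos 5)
Your proof is correct and follows the same route as the cited source: the paper gives no proof of this lemma, attributing it to \cite[Proposition 3.1]{BP} and noting only that the approximants come from ``convolution with an approximate identity,'' which is exactly your construction, including the translation trick $u_w(z)=u(z+w)$ that reduces Griffiths semi-negativity of $H*\rho_\varepsilon$ to that of $h$, and the use of a radial kernel to get monotone decrease. No gaps.
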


If $h$ is Griffiths semi-positive, we can also get an increasing approximating sequence by taking the dual metric of above Lemma \ref{sequence}.
Let $\{ h_\nu \}_{\nu =1}^\infty$ be an approximating sequence of $h$. We investigate
analytic properties about $\theta_{h_\nu}$, $\theta_{h}$, $\Theta_{h_\nu}$, and $\Theta_{h}$, where $\theta_{h_\nu}$(resp. $\theta_{h}$)
is the connection form
associated with $h_\nu$ (resp. $h$).
For an arbitrary smooth vector field $\xi$, $\wtilde{\theta}_h$ denotes $\theta_h (\xi)$, and $\wtilde{\Theta}_h$ denotes $\Theta_h(\xi, \xi)$.

\begin{theorem}\cite[Theorem 1.6]{Rau1}\label{negativeweakconv}
Let $X$ be a complex manifold with a positive Hermitian form $\omega$, and let $h$ be a singular Hermitian metric that is Griffiths semi-negative.
Moreover let $\{ h_\nu \}_{\nu =1}^\infty$ be any approximating sequence of smooth Hermitian metrics with Griffiths semi-negative
curvature, decreasing to $h$.

If there exists $\epsilon > 0$ such that $\det h >\epsilon$, then \\
(1) $\wtilde{\theta}_{h_\nu}\in L^2_{loc}(X)$ uniformly in $\nu$, and $\wtilde{\theta}_{h}\in L^2_{loc}(X)$, \\
(2) $\wtilde{\theta}_{h_\nu}\in L^2_{loc}(X)$ weakly converges to $\wtilde{\theta}_{h}$ in $L^2_{loc}(X)$, \\
(3) $\wtilde{\Theta}_{h_\nu}\in L^1_{loc}(X)$ uniformly in $\nu$, $\wtilde{\Theta}_{h}$ has measure coefficients, and $\wtilde{\Theta}_{h_\nu}$ weakly converges to $\wtilde{\Theta}_{h}$ in the sense of measures.
\end{theorem}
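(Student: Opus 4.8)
The statement is local, so I would fix a coordinate polydisk $U\Subset X$ on which $E$ is trivialized by a holomorphic frame $(e_1,\dots ,e_r)$, write the metrics as Hermitian matrices $H_\nu=\bigl(h_\nu(e_\lambda ,e_\mu)\bigr)$ and $H=\bigl(h(e_\lambda ,e_\mu)\bigr)$, so that $\theta_{h_\nu}=H_\nu^{-1}\partial H_\nu$ and $\Theta_{h_\nu}=\dbar\theta_{h_\nu}$, fix $K\Subset U$, and prove the three assertions there, contracting at the end against the fixed smooth field $\xi$. The first and most elementary step is to promote the hypothesis $\det h>\epsilon$ to a two-sided bound: since the $h_\nu$ decrease to $h$ we have $H\le H_\nu\le H_1$ as Hermitian forms, $H_1$ is smooth, and $\det H_\nu\ge\det H>\epsilon$, so on $K$ one gets $c\,\mathrm{Id}\le H_\nu\le C\,\mathrm{Id}$ with $0<c<C$ independent of $\nu$. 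In particular all of the metrics $h_\nu$ and $h$ are uniformly equivalent to the trivial one and $H_\nu^{-1}$ is bounded uniformly in $\nu$; this uniform invertibility is exactly where $\det h>\epsilon$ is used, and it is what will make every estimate below uniform in $\nu$.

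For (1) and (2): Griffiths semi-negativity of $h_\nu$ makes $\log|u|^2_{h_\nu}$ plurisubharmonic for every holomorphic section $u$; applying this to the constant sections $e_\lambda$ and $e_\lambda+c\,e_\mu$ ($c\in\{1,\sqrt{-1}\}$) exhibits every entry of $H_\nu$ as a real-linear combination of functions $e^{\psi}$ with $\psi$ plurisubharmonic and, by step one, bounded on $K$ uniformly in $\nu$. A bounded plurisubharmonic function lies in $W^{1,2}_{\mathrm{loc}}$ with norm controlled by its sup-norm (integrate $i\partial\dbar\psi$ and $i\partial\psi\wedge\dbar\psi$ by parts against a cut-off), and $\partial e^{\psi}=e^{\psi}\partial\psi$; hence every entry of $\partial H_\nu$ lies in $L^2_{\mathrm{loc}}(K)$ uniformly in $\nu$, and since $H_\nu^{-1}$ is uniformly bounded, $\wtilde\theta_{h_\nu}=\theta_{h_\nu}(\xi)\in L^2_{\mathrm{loc}}$ uniformly. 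The same argument applied to $H$ gives $\partial H\in L^2_{\mathrm{loc}}$ and hence $\wtilde\theta_h=(H^{-1}\partial H)(\xi)\in L^2_{\mathrm{loc}}$, which is the rest of (1). For (2): $H_\nu\to H$ pointwise and boundedly, so $H_\nu\to H$ in $L^1_{\mathrm{loc}}$ and $H_\nu^{-1}\to H^{-1}$ in $L^2_{\mathrm{loc}}$, whence $\partial H_\nu\to\partial H$ in the sense of distributions; combined with the uniform $L^2_{\mathrm{loc}}$ bound this forces $\partial H_\nu\rightharpoonup\partial H$ weakly in $L^2_{\mathrm{loc}}$, so the product $\theta_{h_\nu}=H_\nu^{-1}\partial H_\nu$ of a strongly convergent and a weakly convergent factor converges to $H^{-1}\partial H=\theta_h$ in the sense of distributions. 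Since $\theta_{h_\nu}$ is also bounded in $L^2_{\mathrm{loc}}$, every weak-$L^2_{\mathrm{loc}}$ limit point must equal $\theta_h$, so the whole sequence converges weakly in $L^2_{\mathrm{loc}}$; contracting with $\xi$ gives (2).

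For (3) the heart of the matter is a uniform $L^1_{\mathrm{loc}}$ bound on $\wtilde\Theta_{h_\nu}$. By Griffiths semi-negativity the Chern curvature endomorphism $\wtilde\Theta_{h_\nu}$ is self-adjoint and negative semi-definite with respect to $h_\nu$, so its operator norm — and, by the uniform equivalence of metrics, its Euclidean norm — is dominated pointwise by $|\mathrm{tr}\,\wtilde\Theta_{h_\nu}|$. Now $\mathrm{tr}\,\Theta_{h_\nu}=\dbar\partial\log\det H_\nu=-\partial\dbar\log\det H_\nu$, and $\log\det H_\nu$ is plurisubharmonic (semi-negativity of $h_\nu$ forces the induced line-bundle metric $\det H_\nu$ on $\det E$ to be semi-negatively curved) and, by step one, bounded on $K$ uniformly in $\nu$; hence $i\,\partial\dbar\log\det H_\nu$ is a positive current of uniformly bounded mass on $K$, and contracting against the fixed smooth $\xi$ (using the pointwise inequality $\alpha(\xi,\xi)\,dV_\omega\lesssim|\xi|^2_\omega\,\alpha\wedge\omega^{n-1}$ for positive $(1,1)$-forms $\alpha$) yields $\int_K|\wtilde\Theta_{h_\nu}|\,dV_\omega\le C$ independently of $\nu$. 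By weak-$*$ compactness of bounded families of measures, some subsequence of $\wtilde\Theta_{h_\nu}$ converges to a matrix of signed measures $\mu$ on $K$; on the other hand $\Theta_{h_\nu}=\dbar\theta_{h_\nu}$ together with $\theta_{h_\nu}\to\theta_h$ weakly in $L^2_{\mathrm{loc}}$ forces $\Theta_{h_\nu}\to\dbar\theta_h=\Theta_h$ as currents, so $\mu=\wtilde\Theta_h$; thus $\wtilde\Theta_h$ has measure coefficients, $\wtilde\Theta_{h_\nu}\to\wtilde\Theta_h$ in the sense of measures, and uniqueness of the limit again gives convergence of the full sequence. The step I expect to fight hardest for is precisely this one: squeezing a genuinely $\nu$-uniform $L^1$ bound on the curvatures out of Griffiths semi-negativity (via the trace and the bounded plurisubharmonic function $\log\det H_\nu$), and then matching the measure limit with the distributional limit $\dbar\theta_h$.
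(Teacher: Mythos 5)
The paper does not prove this statement at all: it is quoted verbatim from Raufi \cite[Theorem 1.6]{Rau1}, so there is no internal proof to compare against. Your reconstruction is correct and follows essentially Raufi's original argument — the uniform two-sided bound on $h_\nu$ coming from $\det h>\epsilon$, the uniform $W^{1,2}_{loc}$ control of the entries via bounded plurisubharmonic functions, and the trace trick (negative semi-definiteness plus the uniformly bounded plurisubharmonic function $\log\det h_\nu$) for the uniform $L^1_{loc}$ curvature bound — so there is nothing to add.
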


The above notations mean that after choosing a basis for $E$ and representing $\wtilde{\theta}_{h_\nu}, \wtilde{\theta}_{h}$, $\wtilde{\Theta}_{h_\nu}$, and $\wtilde{\Theta}_{h}$ as a matrix,
each element of the matrix has measure coefficients, weakly converges, and so on.
If the $L^2$ norm, on a fixed compact subset of $X$, of each element of $\wtilde{\theta}_{h_\nu}$ has an upper bound which
is independent of $\nu$, we say $\wtilde{\theta}_{h_\nu}\in L^2_{loc}(X)$ uniformly in $\nu$.

This type of lemma is only known in the case that the singular Hermitian metric
$h$ is Griffiths semi-negative. We show it in the situation that $h$ is Griffiths semi-positive.
We start by preparing some lemmas for it.

\begin{lemma}\label{seki}
Let $h$ be a Griffiths semi-negative singular Hermitian metric, and $h_{i\overline{j}}$ be the
$(i,j)$ element of $h$. Then it follows that
$$
\partial (h_{i\overline{j}}h_{{k\overline{l}}}) =(\partial h_{i\overline{j}} )h_{k\overline{l}}+
h_{i\overline{j}}(\partial h_{k\overline{l}})
$$
in the sense of distributions. Moreover, we have \\
(1)~$\partial (\det h)\in L^2_{loc}(X)$, \\
(2)~$\partial (\det h_\nu)\in L^2_{loc}(X)$ uniformly in $\nu$, \\
(3)~$\partial (\det h_\nu)\in L^2_{loc}(X)$ weakly converges to $\partial (\det h)\in L^2_{loc}(X)$ in $L^2_{loc}(X)$.

\end{lemma}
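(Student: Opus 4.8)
The plan is to first establish the product rule for distributional derivatives, then deduce the three analytic statements about $\partial(\det h)$ by expanding the determinant. For the product rule, the key point is that a Griffiths semi-negative singular Hermitian metric $h$ has entries $h_{i\overline{j}}$ that are locally bounded: indeed, since $|u|_h^2 = \sum h_{i\overline{j}} u_i \overline{u_j}$ is plurisubharmonic (hence locally bounded above) for every local holomorphic section $u$, polarization shows each $h_{i\overline{j}}$ is a locally bounded function (the diagonal entries $h_{i\overline{i}} = |e_i|_h^2$ are psh, and the off-diagonal ones are controlled by Cauchy--Schwarz). Moreover, by Lemma \ref{sequence} applied locally on a polydisk, we have smooth Hermitian metrics $h_\nu$ with Griffiths semi-negative curvature decreasing to $h$, so on a slightly smaller polydisk the $h_\nu$ are uniformly locally bounded. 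For smooth metrics the Leibniz rule $\partial\big((h_\nu)_{i\overline{j}} (h_\nu)_{k\overline{l}}\big) = (\partial (h_\nu)_{i\overline{j}}) (h_\nu)_{k\overline{l}} + (h_\nu)_{i\overline{j}} (\partial (h_\nu)_{k\overline{l}})$ holds pointwise; I would then pass to the limit in the sense of distributions, using that $(h_\nu)_{i\overline{j}} \to h_{i\overline{j}}$ in $L^1_{loc}$ (by dominated convergence, from monotonicity and local boundedness) together with the $L^2_{loc}$ bounds on $\partial (h_\nu)_{i\overline{j}}$ supplied by Theorem \ref{negativeweakconv}(1) (recall $\widetilde{\theta}_{h_\nu} = h_\nu^{-1}\partial h_\nu (\xi)$, and since $h_\nu$ is bounded and $\det h_\nu$ is bounded below this gives $\partial (h_\nu)_{i\overline{j}} \in L^2_{loc}$ uniformly; here one uses condition analogous to $\det h > \epsilon$, which may be arranged locally or assumed as in the ambient setting).

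The Leibniz rule for products of two entries extends by induction to products of $r$ entries: $\partial\big(h_{i_1\overline{j_1}} \cdots h_{i_r\overline{j_r}}\big) = \sum_{m=1}^r h_{i_1\overline{j_1}} \cdots (\partial h_{i_m\overline{j_m}}) \cdots h_{i_r\overline{j_r}}$, again justified by approximation since all factors are locally bounded and all the relevant derivatives lie in $L^2_{loc}$ uniformly. Expanding $\det h = \sum_{\sigma \in \mathfrak{S}_r} \operatorname{sgn}(\sigma)\, h_{1\overline{\sigma(1)}} \cdots h_{r\overline{\sigma(r)}}$ and applying this, I get $\partial(\det h)$ as a finite sum of products of $(r-1)$ bounded entries times one entry-derivative in $L^2_{loc}$; since a bounded function times an $L^2_{loc}$ function is $L^2_{loc}$, this proves (1). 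For (2), the same expansion applied to $h_\nu$ together with the uniform-in-$\nu$ local bounds on $(h_\nu)_{i\overline{j}}$ and on $\partial (h_\nu)_{i\overline{j}}$ gives a uniform-in-$\nu$ $L^2_{loc}$ bound on $\partial(\det h_\nu)$.

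For (3), I would combine the uniform $L^2_{loc}$ bound from (2) with identification of the weak limit. By (2) and weak compactness, every subsequence of $\{\partial(\det h_\nu)\}$ has a further subsequence converging weakly in $L^2_{loc}$; it suffices to show the limit is always $\partial(\det h)$. But $\det h_\nu \to \det h$ in $L^1_{loc}$ (again dominated convergence: $\det h_\nu$ decreases and is locally bounded, using monotonicity of $h_\nu$ to bound the entries, plus the positivity $\det h_\nu > 0$), so $\partial(\det h_\nu) \to \partial(\det h)$ in the sense of distributions; since the weak $L^2_{loc}$ limit must agree with the distributional limit, it equals $\partial(\det h)$. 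A standard subsequence argument then upgrades this to weak $L^2_{loc}$ convergence of the full sequence.

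I expect the main obstacle to be the careful bookkeeping in the passage to the limit for the Leibniz rule: one must check that products of the form $(h_\nu)_{i\overline{j}} \partial (h_\nu)_{k\overline{l}}$ converge to $h_{i\overline{j}} \partial h_{k\overline{l}}$ as distributions, which requires pairing a weakly $L^2_{loc}$-convergent sequence (the derivatives, via Theorem \ref{negativeweakconv}(2)) against a strongly $L^2_{loc}$-convergent sequence (the entries — strong convergence coming from monotone bounded convergence, since $(h_\nu)_{i\overline{j}} \to h_{i\overline{j}}$ monotonically up to the polarization decomposition, and boundedly). Weak-times-strong passes to the limit, but one has to be slightly attentive because the entries themselves are not monotone individually — only the quadratic form is — so the strong convergence of off-diagonal entries should be obtained by polarizing the diagonal ones or by dominated convergence directly from pointwise convergence plus the local bound.
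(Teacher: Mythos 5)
Your overall strategy coincides with the paper's: approximate $h$ locally by the smooth, Griffiths semi-negative, decreasing sequence $\{h_\nu\}$ of Lemma \ref{sequence}, write the Leibniz identity for $h_\nu$, pass to the limit by pairing the strong $L^2(K)$ convergence of the uniformly locally bounded entries against the weak $L^2(K)$ convergence of their $\partial$-derivatives, and then expand $\det h$ and iterate. The limit bookkeeping you describe (weak-times-strong, dominated convergence for the non-monotone off-diagonal entries, identification of the weak limit of $\partial(\det h_\nu)$ through the distributional convergence of $\det h_\nu$) is exactly what is needed and matches the paper's argument, which is terser on the induction step but identical in substance.

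There is, however, one genuine gap: the source of the uniform $L^2_{loc}$ bound on $\partial (h_\nu)_{i\overline{j}}$ and of the weak convergence $\partial (h_\nu)_{i\overline{j}}\rightharpoonup \partial h_{i\overline{j}}$, on which the whole limit passage rests. You derive these from Theorem \ref{negativeweakconv}, i.e. from the estimates on the connection forms $\wtilde{\theta}_{h_\nu}=h_\nu^{-1}\partial h_\nu(\xi)$, writing $\partial h_\nu=h_\nu\wtilde{\theta}_{h_\nu}$. But Theorem \ref{negativeweakconv} requires $\det h>\epsilon$, a hypothesis that Lemma \ref{seki} does not make and that cannot in general be ``arranged locally'': $\det h$ may vanish or tend to $0$ on the degeneracy set of $h$ (the set $F$ of Definition \ref{sGp}), and $\det h_\nu$ need not be uniformly bounded below. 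As written, your argument therefore proves the lemma only under an extra nondegeneracy assumption. The repair is to invoke \cite[Proposition 1.4]{Rau1} (together with \cite[Lemma 5.1]{Rau1}) directly, as the paper does: for any Griffiths semi-negative $h$ one has $\partial h\in L^2_{loc}$, $\partial h_\nu\in L^2_{loc}$ uniformly in $\nu$, and $\partial h_\nu\rightharpoonup\partial h$ weakly in $L^2_{loc}$, with no lower bound on $\det h$. The reason no lower bound is needed is that the estimate there comes from $\sqrt{-1}\ddbar|u|^2_{h_\nu}\ge\{D'u,D'u\}_{h_\nu}$ applied to constant sections $u$ in a trivialization, where $\{D'u,D'u\}_{h_\nu}$ controls $|\partial h_\nu\, u|^2$ using only the uniform local upper bound on $h_\nu$ (one contracts with $h_\nu^{-1}\ge C^{-1}{\rm Id}$, which follows from $h_\nu\le C\,{\rm Id}$, not from a bound on $\det h_\nu$). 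With that substitution the rest of your argument goes through.
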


\begin{proof}
Since the setting is local, we can assume that $X$ is a polydisk in $\mathbb{C}^n$, $E$ is trivial
over $X$, and $h$ is represented as a matrix. Without any loss of generality, we also can take an approximating sequence $\{ h_\nu \}_{\nu =1}^\infty$ of smooth Hermitian metrics with Griffiths semi-negative
curvature, decreasing to $h$ on $X$. We begin to show that
$$
\int_X  \partial (h_{i\overline{j}}h_{{k\overline{l}}})\chi = \int_X \{ (\partial h_{i\overline{j}} )h_{k\overline{l}}+
h_{i\overline{j}}(\partial h_{k\overline{l}})\} \chi
$$
for any test form $\chi\in C^{\infty (n-1, n)}_{c}(X)$.
It is known that
$$
\int_X  \partial (h_{\nu_{i\overline{j}}}h_{\nu_{{k\overline{l}}}})\chi = \int_X \{ (\partial h_{\nu_{i\overline{j}}} )h_{\nu_{k\overline{l}}}+
h_{\nu_{i\overline{j}}}(\partial h_{\nu_{k\overline{l}}})\} \chi \hspace{8mm} \cdots (\diamondsuit)
$$
for each $\nu$.
Firstly, we have
$$
\left| \int_X \{ \partial (h_{\nu_{i\overline{j}}}h_{\nu_{{k\overline{l}}}})-\partial (h_{{i\overline{j}}}h_{{{k\overline{l}}}}) \} \chi \right|
=\left| \int_X ( h_{\nu_{i\overline{j}}}h_{\nu_{{k\overline{l}}}} - h_{{i\overline{j}}}h_{{{k\overline{l}}}}) ) \partial \chi \right|.
$$
Each element of $h$ and $h_\nu$ is locally bounded uniformly in $\nu$ \cite[Remark 2.2.3]{PT}. Hence
this integral value goes to zero by the Lebesgue convergence theorem.

Secondly, we get
\begin{align*}
& \left| \int_X \{ (\partial h_{\nu_{i\overline{j}}} )h_{\nu_{k\overline{l}}}-(\partial h_{{i\overline{j}}} )h_{{k\overline{l}}} \} \chi \right| \\
& \le \left| \int_X \partial h_{\nu_{i\overline{j}}} ( h_{\nu_{k\overline{l}}}-h_{{k\overline{l}}}) \chi \right| +
\left| \int_X (\partial h_{\nu_{i\overline{j}}}-\partial h_{{i\overline{j}}} )h_{{k\overline{l}}}  \chi \right| \\
&\le \| \partial h_{\nu_{i\overline{j}}} \|_{L^2(K)} \| h_{\nu_{k\overline{l}}}-h_{{k\overline{l}}}\|_{L^2(K)}
+ \left| \int_X (\partial h_{\nu_{i\overline{j}}}-\partial h_{{i\overline{j}}} )h_{{k\overline{l}}}  \chi \right|,
\end{align*}
where $K$ denotes the support of $\chi$.
Locally boundness of $h$ leads to that $\| h_{\nu_{k\overline{l}}}-h_{{k\overline{l}}}\|_{L^2(K)} \to 0$ as $\nu \to \infty$.
Then $\| \partial h_{\nu_{i\overline{j}}} \|_{L^2(K)}$ is uniformly bounded in $\nu$ \cite[Proposition 1.4]{Rau1}. Therefore, the first term goes to zero. The second term also goes to zero
for the reason that
$\partial h_{\nu_{i\overline{j}}}$ weakly converges to $\partial h_{{i\overline{j}}}$ in $L^2(K)$ and $h\chi \in L^2(K)$.
Then we can conclude that $(\partial h_{\nu_{i\overline{j}}} )h_{\nu_{k\overline{l}}}$ weakly converges to
$(\partial h_{{i\overline{j}}} )h_{{k\overline{l}}}$, and $h_{\nu_{i\overline{j}}}(\partial h_{\nu_{k\overline{l}}})$ also
weakly converges to $h_{{i\overline{j}}}(\partial h_{{k\overline{l}}})$.

Finally, taking weak limits of ($\diamondsuit$), we obtain
$$
\partial (h_{i\overline{j}}h_{{k\overline{l}}}) =(\partial h_{i\overline{j}} )h_{k\overline{l}}+
h_{i\overline{j}}(\partial h_{k\overline{l}}).
$$
Repeating this argument, we consequently prove (1), (2), and (3) for the reason that
$\partial h \in L^2_{loc}(X)$, $\partial h_\nu \in L^2_{loc}(X)$ uniformly in $\nu$, and
$\partial h_\nu $ weakly converges to $\partial h $ in $L^2_{loc}(X)$ \cite[Proposition 1.4, Lemma 5.1]{Rau1}.
\end{proof}

Subsequently, we prepare lemmas in the local setting.

\begin{lemma}\label{bunsuu}
Let $h$ be a Griffiths semi-negative singular Hermitian metric. We assume that $\det h >\epsilon$ for some positive constant $\epsilon>0$,
then we have \\
$$
\partial \left( \frac{1}{\det h}\right) = - \frac{\partial \det h}{\det^2 h}
$$
in the sense of distributions.
\end{lemma}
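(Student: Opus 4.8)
The plan is to reduce the statement to the corresponding smooth identity via the approximating sequence $\{h_\nu\}$ supplied by Lemma \ref{sequence}, exactly as in the proof of Lemma \ref{seki}, and then pass to the limit in the sense of distributions. First I would note that the assertion is local, so we may assume $X$ is a polydisk, $E$ is trivial, and $h$ is a matrix with $\det h > \epsilon$; and we may choose a decreasing approximating sequence $h_\nu$ of smooth Griffiths semi-negative metrics with $\det h_\nu \ge \det h > \epsilon$ (the monotonicity of $\det$ under the decreasing convergence $h_\nu \downarrow h$ follows since each $h_\nu \ge h$ as Hermitian forms). For each $\nu$ the classical quotient rule gives
$$
\partial\left(\frac{1}{\det h_\nu}\right) = -\frac{\partial \det h_\nu}{\det^2 h_\nu}
$$
pointwise, hence as a distributional identity tested against any $\chi \in C^{\infty(n-1,n)}_c(X)$.

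Next I would take $\nu \to \infty$ on both sides. For the left-hand side, $1/\det h_\nu$ is bounded (by $1/\epsilon$) and converges pointwise a.e.\ to $1/\det h$, so by dominated convergence $\int_X \frac{1}{\det h_\nu}\,\partial\chi \to \int_X \frac{1}{\det h}\,\partial\chi$, i.e.\ $\partial(1/\det h_\nu) \to \partial(1/\det h)$ as distributions. For the right-hand side I would write
$$
\frac{\partial \det h_\nu}{\det^2 h_\nu} - \frac{\partial \det h}{\det^2 h}
= \frac{(\partial \det h_\nu - \partial \det h)}{\det^2 h_\nu} + (\partial \det h)\left(\frac{1}{\det^2 h_\nu} - \frac{1}{\det^2 h}\right),
$$
test against $\chi$, and estimate each term. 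The second term tends to $0$ by dominated convergence, since $\partial \det h \in L^2_{loc}$ by Lemma \ref{seki}(1), $\chi$ has compact support, and $1/\det^2 h_\nu \to 1/\det^2 h$ pointwise a.e.\ while staying bounded by $1/\epsilon^2$. For the first term, since $1/\det^2 h_\nu \le 1/\epsilon^2$ uniformly and $\chi$ is smooth with compact support $K$, the quantity $\chi/\det^2 h_\nu$ is bounded in $L^2(K)$; combined with the weak $L^2_{loc}$ convergence $\partial \det h_\nu \rightharpoonup \partial \det h$ from Lemma \ref{seki}(3), one gets convergence to $0$ — here one must be slightly careful because weak convergence paired with a varying (not fixed) test function does not immediately give convergence, so I would instead split once more, writing $\chi/\det^2 h_\nu = \chi/\det^2 h + \chi(1/\det^2 h_\nu - 1/\det^2 h)$: the first piece pairs with the weakly convergent sequence $\partial\det h_\nu - \partial\det h \rightharpoonup 0$ against the fixed $L^2$ function $\chi/\det^2 h$, and the second piece is controlled by $\|\partial\det h_\nu - \partial\det h\|_{L^2(K)} \cdot \|\chi(1/\det^2 h_\nu - 1/\det^2 h)\|_{L^2(K)}$, where the first factor is bounded (Lemma \ref{seki}(2)) and the second tends to $0$ by dominated convergence.

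Passing to the limit in the smooth identity then yields
$$
\partial\left(\frac{1}{\det h}\right) = -\frac{\partial \det h}{\det^2 h}
$$
in the sense of distributions, as claimed. The main obstacle I anticipate is precisely the bookkeeping in the right-hand side limit: naively one wants to combine the weak convergence of $\partial\det h_\nu$ with the pointwise convergence of $1/\det^2 h_\nu$, but weak-times-strong convergence requires one factor to converge strongly, so the extra splitting step above is essential. Everything else — localization, the uniform lower bound on $\det h_\nu$, and the $L^2_{loc}$ control of $\partial\det h_\nu$ — is handed to us directly by Lemma \ref{seki} and Lemma \ref{sequence}.
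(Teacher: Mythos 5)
Your proposal is correct and follows essentially the same route as the paper: reduce to the smooth quotient rule for $h_\nu$, pass to the limit using dominated convergence on the left-hand side, and on the right-hand side combine the uniform $L^2$ bound and weak $L^2$ convergence of $\partial \det h_\nu$ (Lemma \ref{seki}) with the dominated convergence of $1/\det^2 h_\nu$. Your two-step splitting is, after recombination, algebraically the same decomposition the paper uses (the paper simply groups the varying factor $1/\det^2 h_\nu - 1/\det^2 h$ with $\partial \det h_\nu$ from the start), so the estimates coincide.
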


\begin{proof}
It is sufficient to show that
$$
\int_X \partial \left( \frac{1}{\det h} \right) \chi =
\int_X - \frac{\partial \det h}{\det^2 h} \chi
$$
for any test form $\chi\in C^{\infty (n-1, n)}_{c}(X)$. It is known that
$$
\int_X \partial \left( \frac{1}{\det h_\nu} \right) \chi =
\int_X - \frac{\partial \det h_\nu}{\det^2 h_\nu} \chi \hspace{8mm}\cdots (\diamondsuit)
$$
for each $\nu$.

Firstly, it follows that $\partial \left( \frac{1}{\det h_\nu} \right)$ weakly converges to
$\partial \left( \frac{1}{\det h} \right)$ for the reason that
$\frac{1}{\det h_\nu}$ is increasing to $\frac{1}{\det h}$.

Secondly, we begin to show that
$\frac{\partial \det h_\nu}{\det^2 h_\nu}$ weakly converges to $\frac{\partial \det h}{\det^2 h}$.
We have
\begin{align*}
&\left| \int_X \frac{\partial \det h_\nu}{\det^2 h_\nu} \chi - \int_X \frac{\partial \det h}{\det^2 h} \chi \right| \\
&\le \left| \int_X \left( \frac{1}{\det^2 h_\nu}- \frac{1}{\det^2 h}\right) \partial \det h_\nu \chi \right| +
\left| \int_X \frac{1}{\det^2 h}\left( \partial \det h_\nu - \partial \det h \right) \chi \right| \\
&\le C \Bigl\| \frac{1}{\det^2 h_\nu}- \frac{1}{\det^2 h}\Bigr\|_{L^2(K)} \|\partial \det h_\nu\|_{L^2(K)}
+ \left| \int_X \frac{1}{\det^2 h}\left( \partial \det h_\nu - \partial \det h \right) \chi \right|,
\end{align*}
where $C$ denotes the supremum of $\chi$ on $X$, and $K$ denotes a support of $\chi$.
The first term goes to zero as $\nu \to \infty$ for the reason that $\|\partial \det h_\nu\|_{L^2(K)}$ uniformly in $\nu$ and
$\frac{1}{\det^2 h_\nu}$ is increasing to $\frac{1}{\det^2 h}<\frac{1}{\epsilon^2}$. For the second term, we know that
$\partial \det h_\nu$ weakly converges $\partial \det h$ in $L^2(K)$ by the Lemma \ref{seki} and $\frac{1}{\det^2 h}<\frac{1}{\epsilon^2}$ is of course $L^2(K)$ function.
Hence it goes to zero as $\nu \to \infty$.

Finally, we can conclude that $\frac{\partial \det h_\nu}{\det^2 h_\nu}$ weakly converges to $\frac{\partial \det h}{\det^2 h}$.
Taking weak limits of ($\diamondsuit$),
we obtain
$$
\partial \left( \frac{1}{\det h} \right) = - \frac{\partial \det h}{\det^2 h}
$$
in the sense of the distributions.
\end{proof}

\begin{lemma}\label{diff}
Let $h$ be a Griffiths semi-negative singular Hermitian metric, and $\what{h}$ be the adjugate matrix of $h$. We assume that $\det h >\epsilon$ for some positive constant $\epsilon>0$,
then we have \\
$$
\partial \left( \frac{1}{\det h}h\right) = \partial \left( \frac{1}{\det h}\right)h + \frac{1}{\det h}\partial h ,
~\partial \left( \frac{1}{\det h}\what{h}\right) = \partial \left( \frac{1}{\det h}\right)\what{h} + \frac{1}{\det h}\partial \what{h}
$$
in the sense of distributions.
\end{lemma}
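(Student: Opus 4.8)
The plan is to mimic the proofs of Lemma \ref{seki} and Lemma \ref{bunsuu} almost verbatim. Since the statement is local I reduce to the case where $X$ is a polydisk in $\mathbb{C}^n$, $E$ is trivial, and $h$ is a Hermitian matrix, and I fix an approximating sequence $\{h_\nu\}_{\nu=1}^\infty$ of smooth Griffiths semi-negative metrics decreasing to $h$. Because $h_\nu \ge h$ as Hermitian forms we have $\det h_\nu \ge \det h > \epsilon$, so $\frac{1}{\det h_\nu} < \frac{1}{\epsilon}$ and $\frac{1}{\det h_\nu} \uparrow \frac{1}{\det h}$. For the smooth metrics the ordinary Leibniz rule gives, entrywise and after pairing with any test form $\chi \in C^{\infty (n-1, n)}_{c}(X)$,
$$
\int_X \partial\left( \frac{1}{\det h_\nu} h_\nu\right)\chi = \int_X \left\{ \partial\left( \frac{1}{\det h_\nu}\right) h_\nu + \frac{1}{\det h_\nu}\partial h_\nu \right\}\chi \hspace{8mm}\cdots (\diamondsuit)
$$
and likewise with $h_\nu$ replaced by its adjugate $\what{h_\nu}$. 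The whole task is to pass to the limit $\nu \to \infty$ in $(\diamondsuit)$.

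For the first identity every ingredient is already available. Integrating by parts, the left side of $(\diamondsuit)$ equals $-\int_X \frac{1}{\det h_\nu} h_\nu\, \partial\chi$; since $\frac{1}{\det h_\nu} h_\nu$ is locally bounded uniformly in $\nu$ (\cite[Remark 2.2.3]{PT}) and converges a.e.\ to $\frac{1}{\det h} h$, dominated convergence sends this to $-\int_X \frac{1}{\det h} h\, \partial\chi = \int_X \partial(\frac{1}{\det h} h)\chi$. For the two terms on the right I use exactly the decomposition from the earlier proofs: each difference splits into a ``product of two differences'' term, controlled by a product of two $L^2(K)$ norms with one factor tending to $0$ and the other uniformly bounded, plus a ``fixed $L^2$ test function against a weakly convergent sequence'' term. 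The relevant facts are $\partial h_\nu \in L^2_{loc}$ uniformly in $\nu$ with $\partial h_\nu \rightharpoonup \partial h$ in $L^2_{loc}$ (\cite[Proposition 1.4, Lemma 5.1]{Rau1}), $h_\nu \to h$ and $\frac{1}{\det h_\nu} \to \frac{1}{\det h}$ in $L^2_{loc}$ by dominated convergence, and $\partial(\frac{1}{\det h_\nu})$ uniformly bounded in $L^2_{loc}$ with $\partial(\frac{1}{\det h_\nu}) \rightharpoonup \partial(\frac{1}{\det h})$ (from the proof of Lemma \ref{bunsuu} together with Lemma \ref{seki}). These are precisely what is needed to conclude $\int_X \partial(\frac{1}{\det h_\nu}) h_\nu\,\chi \to \int_X \partial(\frac{1}{\det h}) h\,\chi$ and $\int_X \frac{1}{\det h_\nu}\partial h_\nu\,\chi \to \int_X \frac{1}{\det h}\partial h\,\chi$, so taking limits in $(\diamondsuit)$ yields the first formula.

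The second identity is the same computation with $h$ replaced by $\what h$, once I record that $\what h$ inherits the required package of properties. Each entry of $\what h$ is $\pm$ an $(r-1)\times(r-1)$ minor of $h$, i.e.\ a finite sum of products of $r-1$ entries of $h$; iterating the distributional Leibniz rule of Lemma \ref{seki} — valid because each $h_{i\overline{j}}$ is locally bounded with $\partial h_{i\overline{j}} \in L^2_{loc}$ — shows that $\partial \what h$ exists as an $L^2_{loc}$ form, that $\what{h_\nu}$ is locally bounded uniformly in $\nu$ and converges a.e.\ to $\what h$ (composition of the a.e.\ convergent $h_\nu$ with a polynomial map), and that $\partial\what{h_\nu} \rightharpoonup \partial\what h$ in $L^2_{loc}$ with uniformly bounded local $L^2$ norm. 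With these in hand the argument of the previous paragraph applies word for word with $\what h$ in place of $h$.

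I expect the only point needing genuine care to be this last propagation step: checking that every partial product appearing in the cofactor expansion stays locally bounded with $L^2_{loc}$ derivative and that the uniform $L^2$ bounds and weak convergences survive the $(r-1)$-fold iteration of Lemma \ref{seki}, so that the ``product of two differences'' estimates still close (one factor $\to 0$ in $L^2_{loc}$, the other uniformly bounded). This is a routine finite induction on the number of factors; everything else is a mechanical replay of Lemmas \ref{seki} and \ref{bunsuu}.
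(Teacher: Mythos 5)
Your proposal is correct and follows essentially the same route as the paper's proof: reduce to the local trivialized setting, write the smooth Leibniz identity for $h_\nu$ (resp. $\what{h}_\nu$) tested against $\chi$, handle the left side by integration by parts plus dominated convergence using the uniform bound $\frac{|\what{h}_\nu|}{\det h_\nu}\le \frac{(r-1)!C^{r-1}}{\epsilon}$, and handle the right side by the same two-term splitting (one factor tending to $0$ in $L^2(K)$ against a uniformly bounded factor, plus a weak-convergence term against a fixed $L^2$ function), invoking Lemmas \ref{seki} and \ref{bunsuu} for the uniform $L^2_{loc}$ bounds and weak convergences of $\partial h_\nu$, $\partial\what{h}_\nu$, $\partial\det h_\nu$, and $\partial(\frac{1}{\det h_\nu})$. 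Your explicit remark that the adjugate entries inherit the boundedness and weak-convergence package by a finite iteration of Lemma \ref{seki} is exactly the step the paper leaves implicit in its "repeating this argument" clause, so nothing is missing.
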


\begin{proof}
The proof of the first part is almost the same as the second part. It is enough to
show that
$$
\int_X \partial \left( \frac{1}{\det h}\what{h}\right) \chi =
\int_X \Bigl\{ \partial \left( \frac{1}{\det h}\right)\what{h} + \frac{1}{\det h}\partial \what{h} \Bigr\} \chi
$$
for any test form $\chi\in C^{\infty (n-1, n)}_{c}(X)$.
It is known that
$$
\int_X \partial \left( \frac{1}{\det h_\nu}\what{h}_\nu\right) \chi =
\int_X \Bigl\{ \partial \left( \frac{1}{\det h_\nu}\right)\what{h}_\nu + \frac{1}{\det h_\nu}\partial \what{h}_\nu \Bigr\} \chi
\hspace{8mm}\cdots (\diamondsuit)
$$
for each $\nu$.
For the left-hand side of the above equation, we will show that
$\partial \left( \frac{1}{\det h_\nu}\what{h}_\nu\right)$ weakly converges to
$\partial \left( \frac{1}{\det h}\what{h}\right)$. We have
$$
\left| \int_X \Bigl\{ \partial \left( \frac{\what{h}_\nu}{\det h_\nu}\right)-\partial \left( \frac{\what{h}}{\det h}\right) \Bigr\} \chi \right|
= \left| \int_K   \left( \frac{\what{h}_\nu}{\det h_\nu}- \frac{\what{h}}{\det h}\right)  \partial \chi \right|.
$$
Here
$$
\left| \left( \frac{\what{h}_\nu}{\det h_\nu}-\frac{\what{h}}{\det h}\right) \partial \chi \right|
\le C'\left( \frac{|\what{h}_\nu|}{\det h_\nu}+\frac{|\what{h}|}{\det h}\right) \le \frac{2(r-1)!C^{r-1}C'}{\epsilon} \in  L^1(K),
$$
where $K$ denotes a support of $\chi$, $C$ denotes the supremum of $h_0$ on $K$,
and $C'$ denotes the supremum of $\partial \chi$ on $K$.
The constant $C$ satisfies the following inequalities that $| h_\nu | \le C$ and $| h | \le C$ on $K$ \cite[Remark 2.2.3]{PT}.
Then we conclude that $\partial \left( \frac{1}{\det h_\nu}\what{h}_\nu\right)$ weakly converges to
$\partial \left( \frac{1}{\det h}\what{h}\right)$ by the Lebesgue convergence theorem.

For the right-hand side of the above equation,
we will prove that $\partial \left( \frac{1}{\det h_\nu}\right)\what{h}_\nu$ weakly converges to $\partial \left( \frac{1}{\det h}\right)\what{h}$ and
$\frac{1}{\det h_\nu}\partial \what{h}_\nu$ weakly converges to $\frac{1}{\det h}\partial \what{h}$.

Firstly, we have
\begin{align*}
&\left| \int_X \Bigl\{ \partial \left( \frac{1}{\det h_\nu}\right)\what{h}_\nu-\partial \left( \frac{1}{\det h}\right)\what{h} \Bigr\} \chi \right| \\
&= \left| \int_X \left( \frac{\partial \det h}{\det^2 h}\what{h} -\frac{\partial \det h_\nu}{\det^2 h_\nu} \what{h}_\nu \right)  \chi \right| \\
&\le \left| \int_X  \left( \partial \det h - \partial \det h_\nu \right)\frac{\what{h}}{\det^2 h} \chi \right| +
\left| \int_X  \partial \det h_\nu \left( \frac{\what{h}}{\det^2 h} - \frac{\what{h}_\nu}{\det^2 h_\nu} \right) \chi \right| \\
&\le \left| \int_X  \left( \partial \det h - \partial \det h_\nu \right)\frac{\what{h}}{\det^2 h} \chi \right| +
C'' \| \partial \det h_\nu \|_{L^2(K)} \Bigl\| \frac{\what{h}}{\det^2 h} - \frac{\what{h}_\nu}{\det^2 h_\nu} \Bigr\|_{L^2(K)}
\end{align*}
by Lemma \ref{bunsuu}, where $K$ denotes a support of $\chi$, and $C''$ denotes the supremum of $\chi$ on $K$.
The first term goes to zero for the reason why $\partial \det h_\nu$ weakly converges to $\partial \det h$ in $L^2(K)$ from the results of Lemma \ref{seki} and
$\frac{\hat{h}}{\det^2 h}\in L^2(K)$.
For the second term, we have $\| \partial \det h_\nu \|_{L^2(K)} $ uniformly in $\nu$, and
$$
\left| \frac{\what{h}}{\det^2 h} - \frac{\what{h}_\nu}{\det^2 h_\nu} \right|^2 \le \frac{4C^2}{\epsilon^4}\in L^1(K).
$$
Therefore, it goes to zero by the Lebesgue convergence theorem.

Finally, taking weak limits of $(\diamondsuit)$, we have
$$
\partial \left( \frac{1}{\det h}\what{h}\right)=
\partial \left( \frac{1}{\det h}\right)\what{h} + \frac{1}{\det h}\partial \what{h}
$$
in the sense of distributions.
\end{proof}

Using the above lemmas, we can get the positive version of Theorem \ref{negativeweakconv}. More precisely,
we have the following theorem.

\begin{theorem}\label{positiveweakconv}
Let $X$ be a complex manifold with a positive Hermitian form $\omega$, and let $h$ be a singular Hermitian metric that is Griffiths semi-positive.
Moreover let $\{ h_\nu \}_{\nu =1}^\infty$ be any approximating sequence of smooth Hermitian metrics with Griffiths semi-positive
curvature, increasing to $h$.

If there exists $C > 0$ such that $\det h < C$, then \\
(1) $\wtilde{\theta}_{h_\nu}\in L^2_{loc}(X)$ uniformly in $\nu$, and $\wtilde{\theta}_{h}\in L^2_{loc}(X)$, \\
(2) $\wtilde{\theta}_{h_\nu}$ weakly converges to $\wtilde{\theta}_{h}$ in $L^2_{loc}(X)$, and \\
(3) $\wtilde{\Theta}_{h_\nu}\in L^1_{loc}(X)$ uniformly in $\nu$, $\wtilde{\Theta}_{h}$ has measure coefficients, and $\wtilde{\Theta}_{h_\nu}$ weakly converges to $\tilde{\Theta}_{h}$ in the sense of measures.
\end{theorem}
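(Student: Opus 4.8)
The plan is to reduce everything to the Griffiths semi-negative case, i.e.\ to Theorem \ref{negativeweakconv}, by passing to the dual bundle $(E^\star,h^\star)$. Since $h$ is Griffiths semi-positive, $h^\star$ is Griffiths semi-negative by definition; in a local holomorphic frame $(e_1,\dots,e_r)$ of $E$ and its dual frame of $E^\star$ the Gram matrix of $h^\star$ is $\overline{h^{-1}}$, so $\det h^\star=1/\det h$. Hence the hypothesis $\det h<C$ becomes $\det h^\star>1/C=:\epsilon$, and the increasing approximating sequence $\{h_\nu\}$ of smooth Griffiths semi-positive metrics gives rise to a \emph{decreasing} approximating sequence $\{h_\nu^\star\}$ of smooth Griffiths semi-negative metrics converging to $h^\star$ (as already noted after Lemma \ref{sequence}). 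Thus Theorem \ref{negativeweakconv} applies verbatim to $(E^\star,h^\star)$ and yields: $\wtilde{\theta}_{h_\nu^\star}\in L^2_{loc}(X)$ uniformly in $\nu$ and $\wtilde{\theta}_{h^\star}\in L^2_{loc}(X)$; $\wtilde{\theta}_{h_\nu^\star}$ weakly converges to $\wtilde{\theta}_{h^\star}$ in $L^2_{loc}(X)$; and $\wtilde{\Theta}_{h_\nu^\star}\in L^1_{loc}(X)$ uniformly in $\nu$, $\wtilde{\Theta}_{h^\star}$ has measure coefficients, $\wtilde{\Theta}_{h_\nu^\star}$ converges weakly to $\wtilde{\Theta}_{h^\star}$ in the sense of measures.

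Next I would transport these statements back to $E$. For smooth metrics the Chern connection of the dual bundle is the dual connection, so in the frame and coframe above one has $\theta_{h_\nu}=-{}^{t}\theta_{h_\nu^\star}$ and $\Theta_{h_\nu}=-{}^{t}\Theta_{h_\nu^\star}$ (with an extra conjugation if one uses the convention $h^\star=\overline{h}^{-1}$; in any case the correspondence is $\mathbb{R}$-linear and preserves the modulus of every matrix entry). Consequently the uniform $L^2_{loc}$ and $L^1_{loc}$ bounds for $\wtilde{\theta}_{h_\nu}$ and $\wtilde{\Theta}_{h_\nu}$ follow at once, and declaring $\theta_h:=-{}^{t}\theta_{h^\star}$, $\Theta_h:=-{}^{t}\Theta_{h^\star}$ (the objects furnished by Theorem \ref{negativeweakconv} on $E^\star$) transfers the weak convergences and the measure-coefficient property of $\wtilde{\Theta}_h$. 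This proves (1), (2), (3) provided one checks that these dually defined objects are the intrinsic ones.

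The remaining point is therefore to verify that $\theta_h$ and $\Theta_h$ so defined coincide with the distributional expressions $h^{-1}\partial h$ and $\dbar(h^{-1}\partial h)$ on all of $X$. This is where the lemmas of this section are used: from $h=(h^\star)^{-1}=\frac{1}{\det h^\star}\what{h^\star}$ one differentiates by Lemma \ref{diff} (applied to the Griffiths semi-negative metric $h^\star$, which satisfies $\det h^\star>\epsilon$), then uses Lemma \ref{bunsuu} for $\partial(1/\det h^\star)$ and Lemma \ref{seki}, iterated, for $\partial\what{h^\star}$, to obtain $\partial h\in L^2_{loc}(X)$, uniformly along $h_\nu$, with the attendant weak convergence. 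Since $h$ is locally bounded — its entries are cofactors of $h^\star$ divided by $\det h^\star\ge\epsilon$, and $h^\star$ is Griffiths semi-negative hence locally bounded \cite[Remark 2.2.3]{PT} — the product $\theta_h=h^{-1}\partial h$ lies in $L^2_{loc}(X)$, and one may pass to the limit in $\theta_{h_\nu}=h_\nu^{-1}\partial h_\nu$ by the standard fact that a weakly $L^2_{loc}$-convergent sequence times an a.e.\ convergent, uniformly bounded sequence converges weakly. Applying $\dbar$ and the same bookkeeping (or simply invoking $\Theta_h=-{}^{t}\Theta_{h^\star}$ with Theorem \ref{negativeweakconv}(3)) gives the curvature statement.

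The main obstacle I anticipate is exactly this last step. The curvature current of a Griffiths \emph{semi-positive} singular metric has no a priori meaning, so one must simultaneously \emph{define} $\theta_h,\Theta_h$ through duality and \emph{reconcile} them with the formulas $h^{-1}\partial h$, $\dbar(h^{-1}\partial h)$; this forces one to differentiate $\frac{1}{\det h^\star}\what{h^\star}$ in the sense of distributions and to push weak limits through the products $\frac{1}{\det h_\nu^\star}\what{h_\nu^\star}\,\partial h_\nu^\star$ and their curvature analogues. The product and quotient rules (Lemmas \ref{seki}, \ref{bunsuu}, \ref{diff}), together with the uniform $L^2$ estimates of \cite{Rau1} and the local boundedness of $h$ coming from $\det h<C$, are precisely what make this limiting process legitimate; the hypothesis $\det h<C$ cannot be dropped, since without it $h$ need not be locally bounded after dualizing.
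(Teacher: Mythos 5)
Your proposal is correct and follows essentially the same route as the paper: dualize to reduce to Theorem \ref{negativeweakconv} for the Griffiths semi-negative metric $h^\star$ (with $\det h^\star>1/C$), and use Lemmas \ref{seki}, \ref{bunsuu}, \ref{diff} to justify the distributional identities $\theta_h=-{}^{t}\theta_{h^\star}$ and $\Theta_h=-{}^{t}\Theta_{h^\star}$ that transport the conclusions back to $E$. The paper packages the reconciliation step slightly differently — it differentiates the identity $h^{\star-1}h^\star=I$ in the sense of distributions to obtain $(\partial h^{\star-1})h^\star+h^{\star-1}\partial h^\star=0$ directly — but this is the same mechanism you describe.
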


\begin{proof}
First of all, the dual metric $h^\star$ satisfies the assumption of the above lemmas.
Repeating the proof of Lemma \ref{diff}, we get
$$
\partial \Bigl(\frac{1}{\det h^\star}\what{h}^{\star}h^\star\Bigr) = \Bigl(\partial (\frac{1}{\det h^\star})\Bigr)\what{h}^{\star}h^\star + \frac{1}{\det h^\star}\partial (\what{h}^{\star}h^\star ).
$$
Using Lemma \ref{seki}, we obtain
\begin{align*}
0&= \partial (h^{\star -1}h^\star)\\
&= \partial \Bigl(\frac{1}{\det h^\star}\what{h}^{\star}h^\star\Bigr)\\
&=\Bigl(\partial (\frac{1}{\det h^\star})\Bigr)\what{h}^{\star}h^\star + \frac{1}{\det h^\star}(\partial \what{h}^{\star})h^\star + \frac{1}{\det h^\star}\what{h}^{\star}(\partial h^\star)\\
&= (\partial h^{\star -1} )h^\star + h^{\star -1} (\partial h^\star )
\end{align*}
in the sense of distributions,
hence
we have $\theta_{h}=-{}^t \theta_{h^\star}$. For each $\nu$, we also have
$\theta_{h_\nu}=-{}^t \theta_{h^\star_\nu}$. Using Theorem \ref{negativeweakconv},
we can prove the part (1) and (2). Part (3) also follows since
$$
\Theta_h = \overline{\partial}\theta_h = -{}^t \overline{\partial}\theta_{h^\star}= -{}^t \Theta_{h^\star}.
$$
\end{proof}

\begin{remark}
If $h$ is smooth, the above equation $\Theta_h = - {}^t\Theta_{h^\star}$ is a well-known fact.
However, if $h$ is singular, differential is in the sense of distributions. Hence $0=\partial h^{-1}h + h^{-1}\partial h$ makes no sense.
For this reason, we do not know whether the equation $\Theta_h = - {}^t\Theta_{h^\star}$ holds when $h$ is singular.
\end{remark}

\section{Proof of Theorem \ref{mainthm1}}\label{proofthm1}
In this section, we will prove Theorem \ref{mainthm1}. First of all, we need some lemmas. Throughout section \ref{proofthm1} and \ref{proofthm2},
$X$ denotes an $n$-dimensional projective manifold,
$\omega$ denotes a K\"ahler form on $X$,
$E$ denotes a holomorphic vector bundle over of rank $r$,
and $h$ denotes a Hermitian metric on $E$.
Let $L^2_{(p,q)}(X, E, h, \omega)$ (resp. $L^2_{loc(p,q)}(X, E, h, \omega)$) be the space of square integrable (resp. locally square integrable)
$E$-valued $(p, q)$-forms on $X$.

\begin{lemma}\label{smoothdbar}
Let $h'$ be a smooth and strictly Griffiths $\delta_\omega$-positive metric on $E$. For any $u\in L^2_{(n, n)}(X, E, h')$, there exists $g\in L^2_{(n, n-1)}(X, E, h')$ such that
$$
\dbar g =u, \hspace{5mm} \| g\|^2_{L^2} \le \frac{1}{\delta n} \| u \|^2_{L^2}.
$$
\end{lemma}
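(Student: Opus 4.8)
The plan is to reduce this to the standard Hörmander–Andreotti–Vesentini $L^2$ estimate for $\dbar$ on a vector bundle, applied on the compact Kähler manifold $(X,\omega)$. The key is that although $h'$ is only strictly \emph{Griffiths} $\delta$-positive, we are solving $\dbar$ in bidegree $(n,n-1)$ with values in $E$, i.e. for $K_X\otimes E$-valued $(0,n-1)$-forms after the canonical twist; in top antiholomorphic degree $q=n$ the Bochner–Kodaira–Nakano curvature term that appears in the a priori estimate involves only the \emph{trace} of the curvature against the identity, and for $(n,q)$-forms this trace is controlled by Griffiths positivity alone. Concretely, for an $E$-valued $(n,q)$-form $\alpha$, writing things in a normal frame, the curvature term $\langle [\sqrt{-1}\Theta_{E,h'},\Lambda_\omega]\alpha,\alpha\rangle$ is a sum over multi-indices of expressions $\sum_{j,k}c_{j\bar k\lambda\mu}$ contracted in a way that, because the $(n,\cdot)$ part forces the holomorphic slot to be full, reduces to $\sum_{j}c_{j\bar j\lambda\mu}$-type sums — exactly Griffiths curvature evaluated on $e_j\otimes v$ and summed over $j$. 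The strict Griffiths $\delta$-positivity hypothesis gives $\sum_j \Theta_{E,h'}(e_j\otimes v, e_j\otimes v)\ge \delta n |v|^2$ pointwise (after choosing $(e_j)$ $\omega$-orthonormal), so the curvature operator on $E$-valued $(n,n)$-forms is bounded below by $\delta n\cdot\mathrm{Id}$.

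The steps, in order, are: (1) Recall the Bochner–Kodaira–Nakano identity on the compact Kähler manifold $X$ for the Hermitian vector bundle $(E,h')$: for smooth $E$-valued forms, $\|\dbar\alpha\|^2+\|\dbar^\ast\alpha\|^2 = \|\nabla''{}^\ast\alpha\|^2 + \langle\langle[\sqrt{-1}\Theta_{E,h'},\Lambda_\omega]\alpha,\alpha\rangle\rangle$. (2) Specialize to bidegree $(n,n)$: compute the curvature endomorphism $[\sqrt{-1}\Theta_{E,h'},\Lambda_\omega]$ on $\Lambda^{n,n}T^\ast_X\otimes E$ in a local frame and identify it, via the identification $\Lambda^{n,n}\cong(\det T_X)\otimes(\overline{\det T_X})$, with the operator $v\mapsto \big(\sum_{j}c_{j\bar j\lambda\mu}\big)v$ on the $E$-factor; invoke the Griffiths $\delta$-positivity of $h'$ to get the lower bound $\ge\delta n$. (3) Deduce the a priori inequality $\|\alpha\|^2_{h'} \le \frac{1}{\delta n}\big(\|\dbar\alpha\|^2 + \|\dbar^\ast\alpha\|^2\big)$ for $\alpha$ in the domain of $\dbar$ and $\dbar^\ast$ in bidegree $(n,n)$ (note that on a compact manifold there are no boundary terms and the density/Friedrichs arguments are standard). (4) Since $u$ is $\dbar$-closed automatically (it is an $(n,n)$-form, the top degree), apply the standard functional-analytic solvability criterion: for any $\beta\perp\ker\dbar^\ast$ the pairing $\langle u,\beta\rangle = 0$ while for $\beta\in\ker\dbar^\ast$ one estimates $|\langle u,\beta\rangle|\le\|u\|\,\|\beta\|\le \|u\|\cdot\frac{1}{\sqrt{\delta n}}\|\dbar^\ast\beta\|$, producing $g$ with $\dbar g=u$ and $\|g\|^2\le\frac{1}{\delta n}\|u\|^2$ by Hahn–Banach / Riesz.

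The main obstacle — really the only nonroutine point — is Step (2): verifying carefully that in bidegree $(n,q)$ the Nakano curvature term collapses to the Griffiths curvature trace, so that strict Griffiths positivity suffices where one would naively expect to need Nakano positivity. This is the classical observation underlying the Griffiths/Demailly vanishing theorem for $K_X\otimes E$; I would either cite \cite[Chapter VII]{DemCom} for the computation of $[\sqrt{-1}\Theta_{E,h'},\Lambda_\omega]$ on $(n,q)$-forms or reproduce the short normal-frame calculation showing the term equals $\sum_{|J|=q}\sum_{j\in J}\Theta_{E,h'}(e_j\otimes\cdot,\,e_j\otimes\cdot)$ acting on $E$, which for $q=n$ forces $J=\{1,\dots,n\}$ and gives the factor $\delta n$. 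Everything else (completeness of $X$, density of smooth forms, the functional-analytic passage from a priori estimate to existence) is the textbook Hörmander argument on a compact Kähler manifold and needs no elaboration.
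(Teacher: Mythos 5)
Your proposal is correct and follows essentially the same route as the paper: both compute the curvature operator $\left[\sqrt{-1}\Theta_{h'},\Lambda\right]$ on $E$-valued $(n,n)$-forms in a normal frame, observe that in top antiholomorphic degree it collapses to the sum $\sum_{j} c_{j\overline{j}\lambda\mu}$ of Griffiths curvature terms, deduce the lower bound $\delta n\cdot\mathrm{Id}$ from strict Griffiths $\delta$-positivity, and then invoke the standard H\"ormander $L^2$ existence theorem. The only difference is presentational: you spell out the functional-analytic step, while the paper cites it directly.
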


\begin{proof}
We will compute the Hermitian operator $\left[ \sqrt{-1}\Theta_{h'} ,\Lambda \right]$, where
$\Lambda $ is the adjoint operators of $L$ which is defined by $Lu = \omega \wedge u$, and $\left[ ~,~ \right]$ is graded Lie bracket.
Writing
\begin{align*}
\sqrt{-1}\Theta_{h'}=\sqrt{-1}&\sum_{1\le j, k\le n, 1\le \lambda, \mu \le r} c_{j \overline{k} \lambda \mu}dz_j\wedge d\overline{z}_k \otimes e_\lambda^\star \otimes e_\mu, \\
\omega =\sqrt{-1}&\sum_{1\le j\le n} dz_j \wedge d\overline{z}_j
\end{align*}
at a fixed point $p\in X$ as in Section \ref{prelim}, we compute the operator
$
\left[ \sqrt{-1}\Theta_{h'} ,\Lambda \right].
$
For any $E$-valued $(n,n)$-$L^2$ form $u=\sum_{\lambda=1}^r u_\lambda dz_1\wedge \ldots \wedge d\overline{z}_n\otimes e_\lambda$, we get
$$
\left[ \sqrt{-1}\Theta_{h'} ,\Lambda \right]u=\sqrt{-1}\Theta_{h'} \Lambda u = \sum_{1\le j \le n, 1\le \lambda, \mu \le r} c_{j\overline{j}\lambda \mu} u_\lambda dz_1\wedge \ldots \wedge d\overline{z}_n \otimes e_\mu .
$$
Therefore, the following equations hold
\begin{align*}
&( \left[ \sqrt{-1}\Theta_{h'} ,\Lambda \right] u, u ) _{h'} \\
&= ( \sum_{1\le j\le n, 1\le \lambda ,\mu\le r} c_{j\overline{j}\lambda \mu} u_\lambda dz_1\wedge \ldots \wedge d\overline{z}_n\otimes e_\mu , \sum_{1\le \nu\le r} u_\nu dz_1\wedge \ldots \wedge d\overline{z}_n\otimes e_\nu ) _{h'} \\
&=\sum_{1\le j\le n, 1\le \lambda ,\mu\le r} c_{j\overline{j}\lambda \mu}u_\lambda \overline{u}_\mu \\
&= \sum_{1\le j\le n} \Bigl(\Theta_{j\overline{j}}^{h'} (\sum_{\lambda =1}^r u_\lambda), (\sum_{\lambda =1}^r u_\lambda)\Bigr)_{h'}.
\end{align*}
For each $j$, by taking a vector field $\xi = \frac{\partial}{\partial z_j}$ we show that
$$
\Bigl(\Theta_{j\overline{j}}^{h'} (\sum_{\lambda =1}^r u_\lambda), (\sum_{\lambda =1}^r u_\lambda)\Bigr)_{h'} \ge \delta | u |^2_{h'}
$$
from the definition of a strictly Griffiths $\delta$-positive metric. Then we get
$$
\sum_{1\le j\le n} \Bigl(\Theta_{j\overline{j}}^{h'} (\sum_{\lambda =1}^r u_\lambda), (\sum_{\lambda =1}^r u_\lambda)\Bigr)_{h'} \ge \delta n | u|^2_{h'},
$$
and we see that the operator $\left[ \sqrt{-1}\Theta_{h'} ,\Lambda \right]$ is positive definite.
Hence it follows that
\begin{align*}
(\left[ \sqrt{-1}\Theta_{h'} ,\Lambda \right]^{-1}u,u)_{h'}&\le \frac{1}{\delta n}(u,u)_{h'}, \\
\int_X (\left[ \sqrt{-1}\Theta_{h'} ,\Lambda \right]^{-1}u,u)_{h'} dV_{\omega} &\le \frac{1}{\delta n}\| u\|^2_{L^2} <+\infty.
\end{align*}
We then can conclude that there exists $g\in L^2_{(n, n-1)}(X, E, h')$ such that $\dbar g =u$ and  $\| g\|^2_{L^2} \le \frac{1}{\delta n} \| u \|^2_{L^2}$, thanks to H\"ormander's $L^2$ estimate (cf. \cite[Theorem 5.1]{DemAna}).
\end{proof}

We prove one more lemma. It is a generalization of the argument of \cite[Section 4]{Rau2}.
\begin{lemma}\label{convolution}
Let $Z$ be a $n$-dimensional submanifold of  $\mathbb{C}^N$, $U$ be an open neighborhood of $Z$ in $\mathbb{C}^N$, and $p\colon U\rightarrow Z$ be a holomorphic retraction map such that
$p\circ i= {\rm id}_Z$, where $i$ is an inclusion map $i\colon Z\rightarrow U$.
We also fix a K\"ahler metric $\omega$ such that $\omega$ is $\ddbar$-exact on $Z$. 
Assume that there is a trivial vector bundle $E$ over $Z$ equipped with a singular Hermitian metric $h$ which is strictly Griffiths $\delta_\omega$-positive in the sense of Definition \ref{def:generalnegativity}. 

Then we can take a sequence of smooth Hermitian metrics $\{ h_\nu \}_{\nu=1}^\infty $
approximating $h$ on any relatively compact subset of $Z$ such that $h_\nu$ is strictly Griffiths $\delta_\omega$-positive for each $\nu$. 
\end{lemma}

\begin{proof}
The assumption implies that there exists a smooth K\"ahler potential  $\varphi$ of $\omega$ on $Z$ such that $\sqrt{-1} \ddbar \varphi =\omega$ and $h\cdot e^{\delta \varphi}$ is Griffiths semi-positive. 
We see that $p^\star(h\cdot e^{\delta \varphi})$ is also Griffiths semi-positive from \cite[Lemma 2.3.2]{PT}. 

Then we take an approximate identity, i.e. $\chi \in C^\infty_c (U)$, $\chi \ge 0$,
$\chi(z)=\chi (|z|), \int_{\mathbb{C}^n}\chi dV =1$, and $\chi_\nu (z)= \nu^n \chi(\nu z)$. 
Since $p^\star E$ is also trivial over $U$, 
we consider smooth Griffiths semi-positive Hermitian metrics $((p^\star (h\cdot e^{\delta \varphi}))^\star \ast \chi_\nu )^\star $ on $p^\star E$ increasing pointwise to $p^\star (h\cdot e^{\delta \varphi})$  on any relatively compact subset of $U$. 
Set $g_\nu:= i^\star (((p^\star (h\cdot e^{\delta \varphi}))^\star \ast \chi_\nu )^\star)$ and $h_\nu:=g_\nu e^{-\delta \varphi}$. 
Since $i$ is an inclusion map, we see that $\{ g_\nu\}$ is a sequence of smooth Hermitian metrics on $E$ with semi-positive Griffiths curvature, increasing pointwise to $h\cdot e^{\delta \varphi}$ on any relatively compact subset of $Z$. 
Hence, $h_\nu$ is strictly Griffiths $\delta_\omega$-positive and increasing to $h$ on any relatively compact subset of $Z$. 
\end{proof}

Then we will prove Theorem \ref{mainthm1}.

\begin{proof}[\indent\sc Proof of Theorem \ref{mainthm1}]
By Serre's GAGA, there exists a Zariski open subset $Z\neq \emptyset$ such that $Z\subset X\setminus S$, $E|_Z$ is a trivial over $Z$, and $\omega$ is $\ddbar$-exact on $Z$ 
for the reason that $X$ is a projective manifold.
We can take $Z$ as a Stein open subset. Then $Z$ can be properly imbedded in $\mathbb{C}^N$ for some large $N$.
We regard $Z$ as a submanifold of $\mathbb{C}^N$.
From Siu's result in  \cite{Siu}, there exists an open neighborhood $U$ of $Z$ in $\mathbb{C}^N$ which is a holomorphic retract of $Z$. Let $p\colon U\rightarrow Z$ be a holomorphic retraction map such that
$p\circ i= {\rm id}_Z$, where $i$ is an inclusion map $i\colon Z\rightarrow U$. Since $E|_Z$ is a trivial vector bundle, $p^\star E$ is also trivial on $U$.
We can take an exhaustion $\{ Z_j\}_{j=1}^\infty$ of $Z$, where each $Z_j$ is a relatively compact Stein subdomain.
From the results of Lemma \ref{convolution}, we can take a sequence of smooth Hermitian metrics $\{ h_\nu\}_{\nu =1}^\infty$ with strictly Griffiths $\delta_\omega$-positive curvature, increasing pointwise to $h$ on $Z_j$ for all $j$. 

For fixed $j$, we get the following inequality
$$
\int_{Z_j} |f|_{h_\nu, \omega}^2 dV_\omega \le \int_{Z_j} |f|_{h, \omega}^2 dV_\omega \le \int_{X} |f|_{h, \omega}^2 dV_\omega <+\infty
$$
for the reason that $\{ h_\nu \}_{\nu=1}^\infty$ is increasing to $h$ and $f\in L^2_{(n, n)}(X, E, h, \omega)$.
Applying Lemma \ref{smoothdbar} and Lemma \ref{convolution} on $Z_j$, we get
$E$-valued $(n,n-1)$-$L^2$ form $g_\nu$ on $Z_j$ such that $\dbar g_\nu=f$, and
$$
\hspace{5mm}\int_{Z_j} | g_\nu |^2_{h_\nu,\omega} dV_\omega \le \frac{1}{\delta n}\int_{Z_j} | f|^2_{h_\nu,\omega} dV_\omega
\le \frac{1}{\delta n}\int_{Z_j} | f|^2_{h,\omega} dV_\omega < +\infty ,
$$
since Lemma \ref{smoothdbar} also holds for Stein manifolds.
Moreover, the right-hand side of above inequalities has upper bound independent of $\nu$. Therefore, we can find a weakly convergent subsequence $\{ g_{\nu_k} \}_{k=1}^\infty$ by using a diagonal argument and monotonicity of $\{ h_\nu \}_{\nu=1}^\infty $.
It follows that $\{ g_{\nu_k} \}_{k=1}^\infty$ weakly converges in $L^2_{(n, n)}(Z_j, E, h_\nu, \omega)$ and the weak limit $g_j$ is in $L^2_{(n,n)}(Z_j, E, h, \omega)$,
i.e.
$$
\int_{Z_j} | g_j |^2_{h,\omega} dV_\omega \le \frac{1}{\delta n}\int_{Z_j} | f|^2_{h,\omega} dV_\omega
\le \frac{1}{\delta n}\int_{Z} | f|^2_{h,\omega} dV_\omega ,
$$
equivalently,
$$
\delta n \int_{Z_j} | g_j |^2_{h,\omega} dV_\omega \le \int_{Z} | f|^2_{h,\omega} dV_\omega .
$$
The right-hand side of the above inequality is independent of $j$, repeating a diagonal argument and taking weak limits, 
then we obtain $E$-valued $(n,n-1)$-$L^2$ form $g$ on $Z$ such that
$$
\int_{Z} | g |^2_{h,\omega} dV_\omega \le \frac{1}{\delta n}\int_{Z} | f|^2_{h,\omega} dV_\omega .
$$
Letting $g$ be $0$ on $X\setminus Z$, $g$ is in $L^2_{(n, n-1)}(X, E, h, \omega)$, and we get
$$
\dbar g =f, \int_{X} | g |^2_{h,\omega} dV_\omega \le \frac{1}{\delta n}\int_{X} | f|^2_{h,\omega} dV_\omega
$$
on $X$ from the following lemma.
\end{proof}

\begin{lemma}$($\cite[Lecture 5]{B}$)$\label{distribution}
Let $X$ be a complex manifold and let $S$ be a complex hypersurface in $X$. Let $f$ and $g$ be (possibly bundle valued) forms with $L^2_{loc}$
coefficients on $X$ satisfying $\dbar g =f$ on $X\setminus S$. Then the equation $\dbar g =f$ also holds on $X$ in the sense
of distributions.
\end{lemma}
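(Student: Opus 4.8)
The plan is to reduce the assertion to a local statement and then to pair the hypothesis $\dbar g=f$ against a family of cut-off functions that vanish on a shrinking neighbourhood of $S$ and increase to $1$ on $X\setminus S$. Since the distributional identity $\dbar g=f$ on $X$ is equivalent to the same identity holding in a neighbourhood of every point of $X$ (localize a test form by a partition of unity), I may assume that $X$ is a ball in $\mathbb{C}^n$ on which $S=\{s=0\}$ for a single holomorphic function $s$ and, in the bundle-valued case, that $E$ is trivial. What has to be proved is then that $\int_X g\wedge\dbar\varphi=\pm\int_X f\wedge\varphi$ for every smooth test form $\varphi$ of the complementary degree with compact support $K$, where the sign and the pairing between $E$ and its dual are exactly those in the definition of the distributional $\dbar$ and are therefore insensitive to whether one works on $X$ or on $X\setminus S$; by hypothesis this identity is already known whenever $\mathrm{supp}\,\varphi\Subset X\setminus S$.

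For $\epsilon\in(0,1)$ I would set $\theta_\epsilon:=\chi_\epsilon(\log|s|)$, where $\chi_\epsilon$ is a smooth non-decreasing function on $\mathbb{R}$ with $\chi_\epsilon\equiv0$ on $(-\infty,2\log\epsilon]$, $\chi_\epsilon\equiv1$ on $[\log\epsilon,+\infty)$ and $0\le\chi_\epsilon'\le 2/|\log\epsilon|$. Then $\theta_\epsilon$ is smooth on all of $X$ — it vanishes on the neighbourhood $\{|s|<\epsilon^2\}$ of $S$, the only place where $\log|s|$ fails to be smooth — satisfies $0\le\theta_\epsilon\le1$ and $\theta_\epsilon\uparrow1$ pointwise on $X\setminus S$, and $\dbar\theta_\epsilon=\chi_\epsilon'(\log|s|)\,\dbar\log|s|$ is supported in the annular region $\{\epsilon^2\le|s|\le\epsilon\}$ with $|\dbar\theta_\epsilon|\le (C/|\log\epsilon|)\,|\partial s|/|s|$. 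The decisive point is the estimate
$$\|\dbar\theta_\epsilon\|_{L^2(K)}^2\ \le\ \frac{C^2}{|\log\epsilon|^2}\int_{K\cap\{\epsilon^2\le|s|\le\epsilon\}}\frac{|\partial s|^2}{|s|^2}\,dV\ \le\ \frac{C_K}{|\log\epsilon|}\ \longrightarrow\ 0\quad(\epsilon\to0),$$
whose content is the bound $\int_{K\cap\{\epsilon^2\le|s|\le\epsilon\}}|s|^{-2}|\partial s|^2\,dV\le C_K|\log\epsilon|$. I would prove this chart by chart: where $ds\ne0$ one may take coordinates in which $S$ is a coordinate hyperplane, reducing the integral to $\int_{\epsilon^2\le|w|\le\epsilon}|w|^{-2}\,dV(w)=2\pi|\log\epsilon|$ in the transverse variable, while a neighbourhood of the singular locus $S_{\mathrm{sing}}$ (an analytic set of complex codimension $\ge2$) contributes the same order $O(|\log\epsilon|)$ by a direct computation in local coordinates. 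This is essentially the statement that a complex hypersurface is negligible for the $L^2$-theory, and it is the one step I expect to require genuine care.

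Granting the cut-off estimate, the conclusion is routine. Fix a test form $\varphi$ as above with $K=\mathrm{supp}\,\varphi$. The form $\theta_\epsilon\varphi$ is smooth, compactly supported, and supported in $K\cap\{|s|\ge\epsilon^2\}\Subset X\setminus S$, so the hypothesis gives $\int_X g\wedge\dbar(\theta_\epsilon\varphi)=\pm\int_X f\wedge(\theta_\epsilon\varphi)$. Writing $\dbar(\theta_\epsilon\varphi)=\dbar\theta_\epsilon\wedge\varphi+\theta_\epsilon\,\dbar\varphi$, the first term is bounded in absolute value by $\|g\|_{L^2(K)}\,\|\dbar\theta_\epsilon\|_{L^2(K)}\,\|\varphi\|_{L^\infty}$, which tends to $0$ by the estimate above since $g\in L^2_{\mathrm{loc}}$; and since $\theta_\epsilon\,\dbar\varphi\to\dbar\varphi$ and $\theta_\epsilon\varphi\to\varphi$ pointwise almost everywhere and boundedly, while $g$ and $f$ lie in $L^2_{\mathrm{loc}}\subset L^1(K)$ against bounded forms, dominated convergence yields $\int_X g\wedge\theta_\epsilon\,\dbar\varphi\to\int_X g\wedge\dbar\varphi$ and $\int_X f\wedge\theta_\epsilon\varphi\to\int_X f\wedge\varphi$. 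Letting $\epsilon\to0$ gives $\int_X g\wedge\dbar\varphi=\pm\int_X f\wedge\varphi$ for all test forms $\varphi$, that is, $\dbar g=f$ on $X$ in the sense of distributions.
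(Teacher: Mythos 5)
The paper does not prove this lemma at all --- it is quoted verbatim from \cite[Lecture 5]{B} --- and your argument is precisely the standard cut-off proof given there, so in substance you have reproduced the cited proof correctly: localize, multiply the test form by $\theta_\epsilon=\chi_\epsilon(\log|s|)$, and kill the error term $\int g\wedge\dbar\theta_\epsilon\wedge\varphi$ by Cauchy--Schwarz using $\|\dbar\theta_\epsilon\|_{L^2(K)}\to 0$. The only place where your write-up undersells the work is the claim that near $S_{\mathrm{sing}}$ (or, more to the point, near points of $S$ where $ds=0$) the bound $\int_{K\cap\{\epsilon^2\le|s|\le\epsilon\}}|s|^{-2}|\partial s|^2\,dV=O(|\log\epsilon|)$ follows ``by a direct computation in local coordinates''; for a general holomorphic $s$ this is not a coordinate computation. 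A clean fix is to note that on the support of $\chi_\epsilon'$ one has $|\log|s||\ge|\log\epsilon|$, hence $|\dbar\theta_\epsilon|^2\le C\,|\partial s|^2/\bigl(|s|^2(\log|s|)^2\bigr)$, and this last function is locally integrable (it is dominated by the trace measure of $\sqrt{-1}\ddbar\bigl(-\log(-\log|s|)\bigr)$, where $-\log(-\log|s|)$ is plurisubharmonic and locally bounded above near $S$); since the supports $\{\epsilon^2\le|s|\le\epsilon\}$ shrink to a null set, dominated convergence then gives $\|\dbar\theta_\epsilon\|_{L^2(K)}\to 0$ without any case analysis on $S$.
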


\begin{remark}\label{integral}
We only have to show $\dbar g =f $ on a local open set $U\subset X$, where $U$ contains points of $X\setminus Z$ and $E|_U$ is trivial on $U$. Since $h$ is Griffiths semi-positive, there exists a sequence of smooth Hermitian metrics $\{ h_\nu \}_{\nu =1}^\infty$ increasing to $h$ in this local setting.
For this reason, if $f$ and $g$ are $L^2_{loc}$ with respect to the singular Hermitian metric $h$, then $f$ and $g$ are $L^2_{loc}$ with respect to
the smooth Hermitian metric $h_0$. Therefore we can apply Lemma \ref{distribution} to Theorem \ref{mainthm1}.
\end{remark}

Before we prove Corollary \ref{maincor1}, we introduce the definition of the Lelong number of a singular Hermitian metric on a line bundle.
\begin{definition}\label{linelelong}
Let $L\to X $ be a line bundle over $X$, and
$g$ be a singular Hermitian metric on $L$. \\
If $g$ is semi-negative,
the Lelong number of $g$ at $x$ is
defined as
$$
\nu(\log g, x)=\liminf_{z \to x} \frac{\log g(z)}{\log |z-x|} .
$$
If $g$ is semi-positive,
the Lelong number of $g$ at $x$ is
defined as
$$
\nu(-\log g, x)=\liminf_{z \to x} \frac{-\log g(z)}{\log |z-x|} .
$$
Here $x$ is a point of $X$, $z$ is a coordinate around $x$.
Taking a basis for $L$, we represent $g$ as a positive function.
\end{definition}

The above definition is independent of the choice of local coordinates.
If $g$ is semi-negative (resp. positive), $\log g$ (resp. $-\log g$) is locally plurisubharmonic.
Therefore Definition \ref{linelelong} coincide with the usual definition of the Lelong number of a closed
positive current (cf. \cite[Theorem 2.8]{DemAna}).

Using these notions, we will prove Corollary \ref{maincor1}.
We recall that $\det h$ is a semi-positive (resp. negative) if $h$ is a Griffiths semi-positive (resp. negative)
singular Hermitian metric (cf. \cite[Proposition 25.1]{HPS}, \cite[Proposition 1.3]{Rau1}).

\begin{proof}[\indent\sc Proof of Corollary \ref{maincor1}]
Let $C^\infty_{(n,n)}(X, E)$ be the space of smooth $E$-valued $(n,n)$ forms on $X$ 
and $\mathcal{U}=\{ U_i\}_{i\in I}$ be a locally finite open cover of $X$ such that $U_i$ are biholomorphic to a polydisc.
For the reason that the Lelong number of $\frac{1}{2}\log \det h^\star $ is less than $1$ for a point $x\in X$, we have
$$
e^{-\log \det h^\star}= \frac{1}{\det h^\star}\in L^1_{loc}(X)
$$
from the results of Skoda \cite{Sko}. Then $|s|^2_h$ is an $L^1_{loc}$ form for any $s\in  C^\infty_{(n,n)}(X, E)$ since $h=\frac{1}{\det h^\star}\what{h}^\star$ and each element of $\what{h}^\star$
is locally bounded \cite[Remark 2.2.3]{PT}. Here $\what{h}^\star$ is the adjugate matrix of $h^\star$.
Hence, there is an inclusion map
$$
C^\infty_{(n,n)}(X, E) \hookrightarrow L^2_{loc(n,n)}(X, E, h, \omega).
$$
We know that $U_{i_0}\cap \cdots \cap U_{i_l}$ is a pseudoconvex domain for all $\{ i_0, \cdots, i_l\} \subset I$. 
Taking a sequence of smooth Hermitian metrics $\{ h_\nu\}_{\nu=1}^\infty$ approximating $h$ and repeating the argument of the proof of Theorem \ref{mainthm1}, 
we can solve the $\overline{\partial}$-equation on $U_{i_0}\cap \cdots \cap U_{i_l}$ with respect to $h$. 
Hence, 
we have the isomorphism
$$
H^n(X, K_X \otimes E)
$$
$$
\cong \frac{\{ f\in L^2_{loc(n,n)}(X, E, h, \omega)\}}{\{ h\in L^2_{loc(n,n)}(X, E, h, \omega)\colon \dbar g=h, g \in L^2_{loc(n,n-1)}(X, E, h, \omega)\}}
$$
from the results of sheaf cohomology. 
This is a singular version of isomorphism theorems (cf.~\cite[Proposition 3.1]{Oh}).
For any $f\in L^2_{loc(n,q)}(X, E, h, \omega)$,
we can obtain an $E$-valued $(n,n-1)$-$L^2$ form $g$ on $X$ such that $\dbar g =f$ and $g \in L^2_{(n, n-1)}(X, E, h, \omega)$ by using Theorem \ref{mainthm1}. We can conclude that
$
H^n(X, K_X\otimes E)=0.
$
\end{proof}

\section{Proof of Theorem \ref{mainthm2}}\label{proofthm2}
We will prove Theorem \ref{mainthm2}. The proof of Theorem \ref{mainthm2} is same as that of Theorem \ref{mainthm1}.
To begin with, we prepare a lemma with respect to a smooth Hermitian metric.
\begin{lemma}\label{nakano}
Let $h'$ be a smooth Hermitian metric that is strictly $\delta$-Griffiths positive.
Then the metric $h' \otimes \det h'$ is strictly $\delta r$-Nakano positive, i.e. the inequality
$$
\Theta_{h'\otimes \det h'}(\tau, \tau) \ge \delta r |\tau |^2_{h'\otimes \det h', \omega}
$$
holds for all non-zero tensors $\tau \in T_X \otimes E\otimes \det E$.

\end{lemma}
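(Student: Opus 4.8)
The statement is pointwise, so fix $p\in X$. Since $h'$ is smooth, $\Theta_{(h')^\star}=-{}^t\Theta_{h'}$, and therefore ``$h'$ is strictly $\delta$-Griffiths positive'' in the sense of Definition~\ref{sGp} is equivalent to the curvature inequality
\[
\Theta_{h'}(\xi\otimes v,\xi\otimes v)\ \ge\ \delta\,|\xi|^2_\omega\,|v|^2_{h'}\qquad\text{for all }\xi\in T_{X,p},\ v\in E_p,
\]
exactly as this is used in the proof of Lemma~\ref{smoothdbar}. Equivalently, the ${\rm Hom}(E,E)$-valued $(1,1)$-form $\Gamma:=\sqrt{-1}\Theta_{h'}-\delta\,\omega\cdot{\rm Id}_E$ is Griffiths semi-positive.

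The plan is to exploit additivity of Chern curvature under tensor products. Since $\det E=\Lambda^r E$, we have $\sqrt{-1}\Theta_{\det h'}={\rm tr}_E\bigl(\sqrt{-1}\Theta_{h'}\bigr)$, and hence
\[
\sqrt{-1}\Theta_{h'\otimes\det h'}\ =\ \sqrt{-1}\Theta_{h'}\otimes{\rm Id}_{\det E}\ +\ {\rm Id}_E\otimes{\rm tr}_E\bigl(\sqrt{-1}\Theta_{h'}\bigr).
\]
The right-hand side is linear in $\sqrt{-1}\Theta_{h'}$, so I feed in the splitting $\sqrt{-1}\Theta_{h'}=\delta\,\omega\cdot{\rm Id}_E+\Gamma$ and handle the two summands separately.

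For the summand $\delta\,\omega\cdot{\rm Id}_E$: since ${\rm tr}_E(\delta\,\omega\cdot{\rm Id}_E)=r\,\delta\,\omega$, its contribution is $\delta\,\omega\cdot{\rm Id}_{E\otimes\det E}+r\,\delta\,\omega\cdot{\rm Id}_{E\otimes\det E}=(r+1)\,\delta\,\omega\cdot{\rm Id}_{E\otimes\det E}$, whose associated Hermitian form on $T_X\otimes E\otimes\det E$ is $\tau\mapsto(r+1)\,\delta\,|\tau|^2_{h'\otimes\det h',\omega}$. For the summand $\Gamma$: as $\Gamma$ is Griffiths semi-positive, the Demailly--Skoda theorem \cite{DSk} --- whose proof is precisely the pointwise algebraic fact that $\Theta\mapsto\Theta\otimes{\rm Id}_{\det E}+{\rm Id}_E\otimes{\rm tr}_E\Theta$ carries a Griffiths semi-positive term to a Nakano semi-positive one --- shows that $\Gamma\otimes{\rm Id}_{\det E}+{\rm Id}_E\otimes{\rm tr}_E\Gamma$ is Nakano semi-positive. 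Adding the two contributions, for every $\tau\in T_X\otimes E\otimes\det E$,
\[
\Theta_{h'\otimes\det h'}(\tau,\tau)\ \ge\ (r+1)\,\delta\,|\tau|^2_{h'\otimes\det h',\omega}\ \ge\ \delta r\,|\tau|^2_{h'\otimes\det h',\omega},
\]
which is the assertion (and in fact yields the marginally sharper constant $(r+1)\delta$).

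In this approach essentially all the depth is packaged into the Demailly--Skoda theorem, invoked as a black box; given it, the argument is routine. The only point meriting care is that it is applied to the ``virtual curvature'' $\Gamma$ rather than to an honest smooth curvature form, which is harmless because the Demailly--Skoda argument is purely algebraic in the curvature coefficients. If a fully self-contained proof were wanted, one would instead have to reopen the Demailly--Skoda curvature computation --- expanding $\Theta_{h'\otimes\det h'}(\tau,\tau)$ in an orthonormal frame, pairing the bundle indices, and applying Griffiths $\delta$-positivity to the rank-one tensors $\xi\otimes e_s$ --- while carrying the constant $\delta$ along; that computation is the step I would expect to be the most delicate.
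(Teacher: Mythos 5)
Your proof is correct, and it reorganizes the argument in a way that differs from the paper's. The paper invokes the quantitative form of the Demailly--Skoda computation, namely the inequality
\[
\bigl(\Theta_{E}+\mathrm{Tr}\,\Theta_{E}\otimes h'\bigr)(u,u)\ \ge\ \sum_{j,k,\lambda}c_{j\overline{k}\lambda\lambda}\,u_{j\lambda}\overline{u}_{k\lambda}
\]
from \cite[Proposition 10.14]{DemAna}, and then feeds strict Griffiths $\delta$-positivity into the diagonal terms $\xi^{(\lambda)}\otimes e_{\lambda}$ with $\xi^{(\lambda)}=\sum_{j}u_{j\lambda}\partial/\partial z_{j}$. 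You instead peel off the strict part first, writing $\sqrt{-1}\Theta_{h'}=\delta\,\omega\cdot\mathrm{Id}_{E}+\Gamma$ with $\Gamma\ge_{\mathrm{Grif}}0$, and apply Demailly--Skoda only in its qualitative form (Griffiths semi-positive implies the twisted form is Nakano semi-positive) to $\Gamma$. The one delicate point in your route --- that the lemma is applied to a ``virtual'' curvature $\Gamma$ rather than an honest curvature tensor --- is correctly identified and correctly dismissed: the Demailly--Skoda lemma is a pointwise linear-algebra statement about Griffiths-semipositive Hermitian forms on $T_{X}\otimes E$, so nothing is lost. What your route buys is a transparent accounting of the constant: the $\delta\,\omega\cdot\mathrm{Id}_{E}$ summand contributes $\delta$ through the $\Theta_{h'}\otimes\mathrm{Id}$ term and $r\delta$ through the trace term, yielding $(r+1)\delta\ge r\delta$, slightly sharper than the statement. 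It is also arguably more robust than the paper's final display, which, read literally one $\lambda$ at a time, extracts only $\delta\,|u|^{2}$ from $\sum_{\lambda}\Theta\bigl(\xi^{(\lambda)}\otimes e_{\lambda},\xi^{(\lambda)}\otimes e_{\lambda}\bigr)\ge\delta\sum_{\lambda}|\xi^{(\lambda)}|^{2}_{\omega}$; the remaining factor of $r$ must come from the trace term, which is exactly what your decomposition makes explicit.
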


\begin{proof}
We write
$$
\sqrt{-1}\Theta_{h'}=\sqrt{-1}\sum_{1\le j, k\le n, 1\le \lambda, \mu \le r} c_{j \overline{k} \lambda \mu}dz_j\wedge d\overline{z}_k \otimes e_\lambda^\star \otimes e_\mu
$$
as in Section \ref{prelim}, where $(e_1, \ldots ,e_r)$ is orthonormal with respect to $h'$ at a fixed point.
It follows that
$$
\Theta_{E\otimes \det E}= \Theta_{E} + {\rm Tr}~\Theta_{E} \otimes h' .
$$
Thus we should prove the following inequality
$$
(\Theta_{E} + {\rm Tr}~\Theta_{E} \otimes h' )(u,u) \ge \delta r |u|^2_{h',\omega}
$$
for any section $u= \sum u_{j\lambda} \frac{\partial}{\partial z_j}\otimes e_\lambda \in T_X \otimes E$.
We have the inequality
$$
(\Theta_{E} + {\rm Tr}~\Theta_{E} \otimes h' )(u,u) \ge \sum_{1\le j,k \le n, 1\le \lambda \le r} c_{j\overline{k}\lambda \lambda}u_{j\lambda}\overline{u}_{k\lambda}
$$
from the result of \cite[Proposition 10.14]{DemAna}. Computing the right-hand side of the above inequality, we obtain
$$
\sum_{1\le j,k \le n, 1\le \lambda \le r} c_{j\overline{k}\lambda \lambda}u_{j\lambda}\overline{u}_{k\lambda} = \sum_{1\le j,k\le n, 1\le \lambda \le r} (\Theta_{j\overline{k}}^{h'}e_\lambda , e_\lambda )_{h'}u_{j\lambda}\overline{u}_{k\lambda}
\ge \delta r |u|^2_{h',\omega}
$$
for the reason that $h'$ is $\delta$-Griffiths positive.
\end{proof}
Then we will prove Theorem \ref{mainthm2}.

\begin{proof}[\indent\sc Proof of Theorem \ref{mainthm2}]
We take $Z, \{Z_j\}_{j=1}^\infty, U$, and $p$ as in the proof of Theorem \ref{mainthm1}. Thanks to Lemma \ref{convolution}, we can take a sequence of smooth Hermitian metrics $\{ h_\nu \}_{\nu=1}^\infty $ with strictly Griffits $\delta_\omega $-positive curvature, 
approximating $h$ on any relatively compact subset of $Z$. 
From the above Lemma \ref{nakano}, the Hermitian metric $h_\nu \otimes \det h_\nu$ is $\delta r$ Nakano positive. Then for each $\nu$, there exists an $E\otimes \det E$-valued $(n,q-1)$-$L^2$ form $g_\nu$ on fixed $Z_j$ such that $\dbar g_\nu =f $, and
\begin{align*}
\int_{Z_j} | g_\nu |^2_{h_\nu \tensor h_\nu,\omega} dV_\omega &\le \frac{1}{\delta qr}\int_{Z_j} | f|^2_{h_\nu \tensor h_\nu,\omega} dV_\omega \\
&\le \frac{1}{\delta qr}\int_{Z_j} | f|^2_{h\tensor h,\omega} dV_\omega < +\infty
\end{align*}
for the reason that $\{ h_\nu \tensor h_\nu\}_{\nu=1}^\infty$ is also increasing pointwise to $h \tensor h$ on any relatively compact subset of $Z$ and we can apply H\"ormander's $L^2$ estimates to it (cf. \cite[Chapter VIII, Theorem 6.1]{DemCom}, \cite{Rau2}).
Taking weak limits $\nu \to \infty$ as in the proof of Theorem \ref{mainthm1}, we can take an $E\otimes {\rm det}E$-valued $(n,q-1)$-$L^2$ form $g_j$ on fixed $Z_j$ such that
$$
\int_{Z_j} | g_j |^2_{h\tensor h,\omega} dV_\omega \le \frac{1}{\delta qr}\int_{Z_j} | f|^2_{h\tensor h,\omega} dV_\omega
\le \frac{1}{\delta qr}\int_{Z} | f|^2_{h\tensor h,\omega} dV_\omega .
$$
Moreover taking limits $j\rightarrow \infty$ as in the proof of Theorem \ref{mainthm1}, we obtain an $E\otimes \det E$-valued $(n,q-1)$-$L^2$ form $g$ on $Z$ such that
$$
\int_{Z} | g |^2_{h\tensor h,\omega} dV_\omega \le \frac{1}{\delta qr}\int_{Z} | f|^2_{h\tensor h,\omega} dV_\omega .
$$
Letting $g$ be 0 on $X \setminus Z$, we see that $g$ is in $L^2_{(n, q-1)}(X, E\tensor E, h\tensor h, \omega)$.
Then we get $\dbar g =f$ on $X$, and
$$
\int_{X} | g |^2_{h\tensor h,\omega} dV_\omega \le \frac{1}{\delta qr}\int_{X} | f|^2_{h\tensor h,\omega} dV_\omega .
$$
\end{proof}
Theorem \ref{mainthm2} lead to Corollary \ref{maincor2}.
The proof of it is the same as the one of Corollary \ref{maincor1}.

\begin{proof}[\indent\sc Proof of Corollary \ref{maincor2}]
Locally, we have
$$
h\tensor h = \frac{1}{\det^2 h^\star} \widehat{h}^\star,
$$
where $\widehat{h}^\star$ is the adjugate matrix of $h^\star$. From the results of Skoda \cite{Sko},
we have
$$
\frac{1}{\det^2 h^\star} \in L^1_{loc}(X) .
$$
Repeating the argument of the proof of Corollary \ref{maincor1}, we can conclude that
$$
H^q(X, K_X\otimes E\tensor E)=0
$$
for $q>0$.
\end{proof}

We have an application of Corollary \ref{maincor2}. We show the following example.

\begin{example}
Let $V$ be a complex vector space of dimension $n+1$, and $X=P(V):=(V\setminus \{ 0 \})/\mathbb{C}^\star = \mathbb{P}^n$
be the projective space of $V$.
We also let $\mathcal{O}(-1)$ denote the
tautological line bundle, and $Q$ denote the
quotient bundle $V/\mathcal{O}(-1)$. 
Then there do not exist any Griffiths semi-positive singular Hermitian metrics 
satisfying strict Griffiths $\delta_\omega$-positivity outside some proper analytic subset $S$ and $\nu(- \log \det h, x)<1$ for all $x\in X$ on $Q$. 
\end{example}

\begin{proof}
There exists an exact sequence
$$
0\longrightarrow \mathcal{O}(-1) \longrightarrow V \longrightarrow Q\longrightarrow 0
$$
over $X=\mathbb{P}^n$.
We have the isomorphisms
$$
\det Q \cong \mathcal{O}(1),
$$
$$
{\rm T}\mathbb{P}^n =Q \otimes \mathcal{O}(1) \cong Q \tensor Q
$$
from \cite[Chapter VII, Example 8.4]{DemCom}. If $Q$ has a Griffiths semi-positive singular Hermitian metric satisfying strict Griffiths $\delta_\omega$-positivity outside some proper analytic subset $S$ and $\nu(- \log \det h, x)<1$, the cohomology group $H^q(\mathbb{P}^n, K_{\mathbb{P}^n} \otimes Q\tensor Q)=H^q(\mathbb{P}^n, K_{\mathbb{P}^n} \otimes {\rm T}\mathbb{P}^n)$ vanishes for any positive integer $q>0$ from Corollary \ref{maincor2}. However, the Serre duality theorem implies that
$$
H^q(\mathbb{P}^n, K_{\mathbb{P}^n} \otimes {\rm T}\mathbb{P}^n)^\star \cong H^{n-q}(\mathbb{P}^n, {\rm T}^\star \mathbb{P}^n)\cong H^{(1, n-q)}(\mathbb{P}^n, \mathbb{C}) \cong
\begin{cases}
\mathbb{C} & (q=n-1) \\
0 & (q \neq n-1).
\end{cases}
$$
This contradicts to Corollary \ref{maincor2}. 
\end{proof}


\end{document}